\renewenvironment{abstract}
{\small
	\begin{center}
		\bfseries \abstractname\vspace{-.5em}\vspace{0pt}
	\end{center}
	\list{}{%
		\setlength{\leftmargin}{14mm}% <---------- CHANGE HERE
		\setlength{\rightmargin}{\leftmargin}%
	}%
	\item\relax}
{\endlist}
\newtheorem{thm}{Theorem}[section]
\newtheorem{defn}[thm]{Definition}
\newtheorem{cor}[thm]{Corollary}
\newtheorem{prop}[thm]{Proposition}
\newtheorem{lem}[thm]{Lemma}
\newtheorem{rk}[thm]{Remark}
\title{Convergence of ASEP to KPZ with basic coupling of the dynamics}
\author{Shalin Parekh}
\address{Columbia University}
\email{sp3577@columbia.edu}
\date{\today}
\begin{document}

\maketitle

\begin{abstract}\textbf{}
\\ 
We prove an extension of a seminal result of Bertini and Giacomin. Namely we consider weakly asymmetric exclusion processes with several distinct initial data simultaneously, then run according to the basic coupling, and we show joint convergence to the solution of the KPZ equation with the same driving noise in the limiting equation. Along the way, we analyze fine properties of nontrivially coupled solutions-in-law of KPZ-type equations.
\end{abstract}

\section{Introduction and context}

Interacting particle systems on the integer lattice have been a popular area of research in recent years. Of particular interest is the asymmetric simple exclusion process (ASEP), which was introduced by Spitzer \cite{Spi70} and subsequently generalized and explored in many works. ASEP is a Feller process on $\{0,1\}^\Bbb Z$ in which one starts with an initial configuration on $\Bbb Z$ consisting of some particles (1's) and some empty sites (0's), and the evolution of the dynamics can be described by having the particles independently perform asymmetric nearest-neighbor (continuous-time) random walks on $\Bbb Z$, but with jumps suppressed whenever one particle tries to jump onto another one. This hard-core repulsion effect between the particles makes the system physically interesting but also mathematically difficult to analyze.
\\
\\
In a seminal paper, Bertini and Giacomin \cite{BG97} showed that under a certain fluctuation regime and specific scaling of the jump parameters, the fluctuations of ASEP are described by a nonlinear stochastic partial differential equation called the Kardar-Parisi-Zhang (KPZ) equation: 
\begin{equation*}\partial_t h(t,x) = \partial_x^2 h(t,x) + (\partial_x h(t,x))^2 +\xi(t,x),
\end{equation*}where $\xi$ is Gaussian space-time white noise, specified by the formal covariance function $\Bbb E[\xi(t,x) \xi(s,y)] = \delta(t-s)\delta(x-y).$ More specifically, Bertini and Giacomin considered ASEP where the right jump rate for each particle equals $1+\epsilon^{1/2}$ and the left jump rate equals $1$. They consider initial data which is ``near stationarity" in a certain precise way. They then define a discrete height function $h^{\epsilon}_t(x)$ ($t \ge 0,x\in \Bbb Z)$ as follows: $h^\epsilon_t(0)$ is the number of particles up to time $t$ that have passed from $0$ to $1$, minus the number of particles that have passed from $1$ to $0$. Then $h^\epsilon_t(x)$ equals $h^\epsilon_t(0)$, plus the number of particles at time $t$ which are between $0$ and $x$ (inclusive), minus the number of vacant sites between $0$ and $x$ (understood to be linearly interpolated when $x$ is not an integer). They then prove that $\epsilon^{1/2}h^{\epsilon}(\epsilon^{-2}t,\epsilon^{-1}x) - \epsilon^{-1}t-t/24$ converges as $\epsilon \to 0$, to the Hopf-Cole solution of the KPZ equation (see Theorem \ref{bg97} for a precise version).
\\
\\
The result was striking because it was one of the first examples of a particle system in a regime that exhibited non-Gaussian fluctuation behavior, and it was one of the works that paved the way to the field of KPZ universality for random growth models, see the survey \cite{Cor12} as well as subsequent recent work on particle systems that built on, generalized, or was inspired by the work of Bertini and Giacomin, e.g. \cite{ACQ11, BQS11, GJ14, DT16, CT17, CST18, CS18, Yang} just to name a few.% The result was also highly non-trivial, as the KPZ equation is classically ill-posed due to the appearance of the nonlinear term which is nonsensical in a classical sense, see Definition \ref{hopf} for its precise meaning.
%\\
%\\
%The result of Bertini and Giacomin was subsequently generalized in several different directions. \cite{CST18} generalized the result to a multi-spin version of ASEP called ASEP$(q,j)$, where one can have several particles at a single site, and the jump rates between neighboring sites are determined based on the number of particles on both sites. \cite{DT16} generalized the Bertini-Giacomin result in an unrelated direction, by considering ASEP with non-nearest-neighbor jumps and showing that the same equation appears in the limit. \cite{CS18} generalized the result in yet another direction, by considering a version of ASEP with boundary reservoirs and showing that one obtains a Neumann-boundary KPZ equation in the limit. Other papers such as \cite{ACQ11} have generalized the class of initial data which is admissible to include the so-called \textit{narrow wedge.}
\\
\\
The main goal of the present work is to prove that in the fluctuation regime of \cite{BG97}, if one starts with \textit{two or more} different initial data, and then one runs the particle system according to the \textit{same} dynamics, then convergence to KPZ holds \textit{jointly} with the same realization of the noise appearing in the limiting equation. When we refer to the ``same dynamics," we are referring to the so-called \textit{basic coupling}, a natural and important object that appears in many contexts when dealing with exclusion systems, e.g. in providing a full description of the ergodic theory of exclusion processes, see \cite{Lig74, Lig05, GS10}. This basic coupling is described as follows: for each pair of sites $(x,y) \in \Bbb Z^2$ if a particle from both systems is present at $x$, and if a particle from one system jumps from $x$ to $y$, then a particle from the other system also jumps from $x$ to $y$ at the same time assuming the target site is not blocked in the other configuration. This coupling can be constructed by the so-called ``graphical construction" of ASEP, which randomly assigns directed arrows to each bond in $\Bbb Z$ according to independent Poisson point processes, see e.g. \cite{Har72,Sep05}. A slightly more general definition and construction of the coupling is given in Subsection 3.1. %The main motivation for considering the basic coupling is that it
\\
\\
With this setup, let us now state our main result. We will say that a sequence $(\epsilon^{1/2}h^{1,\epsilon}(0,\epsilon^{-1}x),\epsilon^{1/2}h^{2,\epsilon}(0,\epsilon^{-1}x))$ of initial data for the height functions is \textit{admissible} if it converges (jointly) in law to some limiting pair of height functions, and if it is tight in the sense that the $L^p$ moments of its absolute value and of its spatial differences can be bounded via a Kolmogorov-Chentsov criterion with a sublinear growth rate at infinity. The precise assumption is given in Section 3, see Theorem \ref{mr2}.

\begin{thm}
\label{mmrr}
Consider the weakly asymmetric scaling of ASEP from \cite{BG97}. Let $(h_0^{1,\epsilon},h_0^{2,\epsilon})$ be an admissible sequence of initial data. Evolve the corresponding height functions $h^{1,\epsilon}(t,x), h^{2,\epsilon}(t,x)$ according to the basic coupling described above. Then $(h^{1,\epsilon}, h^{2,\epsilon})$ converge jointly as $\epsilon\to 0$ to the solution of the KPZ equation driven by the same noise.
\end{thm}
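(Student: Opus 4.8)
The plan is to follow the classical Bertini–Giacomin strategy—pass through the microscopic Cole-Hopf transform and the discrete stochastic heat equation (SHE)—but carry it out for the pair of coupled systems simultaneously, with the crucial point being that the martingale noise driving each discrete SHE is built from the \emph{same} underlying graphical construction, so that the two discrete martingales share a common quadratic (co)variation structure. Concretely, for $i=1,2$ let $Z^{i,\epsilon}(t,x) = \exp(-\lambda_\epsilon h^{i,\epsilon}(\epsilon^{-2}t,\epsilon^{-1}x) + \nu_\epsilon t)$ be the Gärtner transform with the parameters $\lambda_\epsilon,\nu_\epsilon$ chosen exactly as in \cite{BG97}. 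First I would recall that each $Z^{i,\epsilon}$ satisfies a discrete SHE $dZ^{i,\epsilon} = \tfrac12\Delta_\epsilon Z^{i,\epsilon}\,dt + Z^{i,\epsilon}\,dM^{i,\epsilon}$, where $M^{i,\epsilon}$ is a martingale whose bracket $d\langle M^{i,\epsilon}\rangle_t$ is an explicit local functional of the configuration $\eta^{i,\epsilon}$ near $x$; this is essentially a one-system input and is already in \cite{BG97}. The new ingredient is to compute the \emph{cross-bracket} $d\langle M^{1,\epsilon}, M^{2,\epsilon}\rangle_t$ under the basic coupling and show it is likewise a local functional of the joint configuration $(\eta^{1,\epsilon},\eta^{2,\epsilon})$, of the same order of magnitude, so that $(M^{1,\epsilon},M^{2,\epsilon})$ is a tight family of (vector-valued) martingales.

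Next I would establish joint tightness of $(Z^{1,\epsilon},Z^{2,\epsilon})$ in the appropriate path space (e.g.\ $C([0,T],\mathcal C(\mathbb{R}))$ with a suitable weighted norm accommodating the sublinear growth built into admissibility). Tightness of each coordinate is inherited from the one-system theory; jointness is automatic once one has the coupled martingale problem and uniform moment bounds, which in turn follow from the same Gronwall/Duhamel arguments applied to the heat kernel—here the admissibility hypothesis (Kolmogorov–Chentsov-type $L^p$ bounds on the initial data with sublinear growth) is exactly what feeds the induction. Then, along any subsequential limit, I would pass to the limit in the coupled martingale problem: each limit point $(Z^1,Z^2)$ solves $dZ^i = \tfrac12 \partial_x^2 Z^i\,dt + Z^i\,dN^i$ for a pair of martingale measures $(N^1,N^2)$, and the key computation from the previous paragraph forces the limiting brackets to be $d\langle N^i(\phi)\rangle_t = \int \phi^2 (Z^i)^2\,dx\,dt$ \emph{and} $d\langle N^1(\phi), N^2(\psi)\rangle_t = \int \phi\psi\, Z^1 Z^2\, dx\,dt$. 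This is precisely the statement that $(N^1,N^2)$ can be realized as $N^i(dt,dx) = Z^i(t,x)\,\xi(dt,dx)$ for one common space-time white noise $\xi$—one recovers $\xi$ as the common ``source'' by a martingale representation / Lévy-characterization argument on the enlarged probability space, using that $Z^i>0$ almost surely (strict positivity of the limit, itself a standard consequence of the one-system analysis). Hence both $Z^1$ and $Z^2$ solve the SHE driven by the \emph{same} $\xi$, and by uniqueness of the mild solution of SHE with the prescribed (limiting) initial data, the whole sequence converges; finally $h^i := -\lambda^{-1}\log Z^i$ gives the joint convergence to the Hopf–Cole solutions of KPZ with common noise.

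The main obstacle I anticipate is the cross-bracket computation and its limiting identification. For a single ASEP, the ``key estimate'' of Bertini–Giacomin replaces a quadratic local function of $\eta$ by its expectation plus a gradient term (a discrete Itô-type trick, the ``$\tau$-swap'' / gradient identity) so that $d\langle M^\epsilon\rangle_t \to \int Z^2\,dx\,dt$; for the cross-bracket one must understand how the \emph{coupled} dynamics correlates the two increments. Under the basic coupling a jump across a bond increments both height functions by the same $\pm 1$ whenever both systems have a particle-hole pair in the right configuration there, and by only one of them otherwise, so the cross-bracket is a local function involving products like $\eta^{1}_x(1-\eta^{1}_{x+1})\eta^{2}_x(1-\eta^{2}_{x+1})$ and its mirror. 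The challenge is to show that, after the Gärtner transform's exponential weights are inserted and one performs the same gradient/Boltzmann–Gibbs-type replacement, this converges to $Z^1 Z^2$ with no anomalous residual term (in particular, no extra drift contribution distinguishing it from the off-diagonal of a single white noise), uniformly in $\epsilon$ and locally uniformly in $(t,x)$. Once that replacement estimate is in hand—adapting the second-moment and duality computations of \cite{BG97} (or the cleaner version in \cite{ACQ11}) to the two-species setting—the rest of the argument is a routine, if lengthy, repackaging of the one-system proof in vector-valued form.
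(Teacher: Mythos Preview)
Your approach is genuinely different from the paper's. You propose to compute the cross-bracket $\langle M^{1,\epsilon}, M^{2,\epsilon}\rangle$ of the G\"artner martingales directly under the basic coupling and show it converges to $\int Z^1 Z^2\,dx\,dt$, which would identify the limiting pair as SHE solutions with a common white noise via martingale representation. The paper avoids this computation entirely. It first proves a purely continuum lemma (Theorem~\ref{mon1}): if two coupled white noises satisfy (i) disjoint-support test functions are uncorrelated and (ii) the difference of the associated KPZ solutions has finite $p$-variation in $x$ for some $p<2$, then the noises coincide. Condition (i) comes for free from the orthogonality \eqref{qv} of the discrete martingales at distinct sites, which holds regardless of how the systems are coupled. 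Condition (ii) is obtained from the \emph{attractivity} \eqref{A} of ASEP: if $\eta^{2,\epsilon}_0\ge\eta^{1,\epsilon}_0$ then the height difference stays monotone in $x$ for all time, hence has bounded variation, and this persists in the limit (Lemma~\ref{lem2}). General initial data are then reduced to the ordered case by sandwiching between smooth approximations and invoking monotonicity \eqref{M} (Proposition~\ref{prop}, Theorem~\ref{mr}).

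The paper's route buys robustness---it transfers verbatim to any nearest-neighbor misanthrope process satisfying \eqref{M} and \eqref{A} with no further model-specific work---at the price of an indirect argument. Your route would be more direct, but the step you correctly flag as ``the main obstacle'' is not a routine adaptation of Bertini--Giacomin. Their key estimate replaces a quadratic local function of a \emph{single} configuration by its hydrodynamic value via an exact algebraic identity tied to the G\"artner transform; the cross-bracket instead involves the quartic two-species expression $\eta^1_x(1-\eta^1_{x+1})\eta^2_x(1-\eta^2_{x+1})$ (and its mirrors), for which no such identity is available, and the two-species Boltzmann--Gibbs replacement you invoke would require a genuinely new argument controlling the discrepancy (second-class) density under weak asymmetry. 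The paper explicitly comments (introduction, and end of Section~3.5) that the direct quadratic-variation route is natural but was not pursued precisely because of this difficulty; your plan is plausible but the hard step is asserted rather than supplied.
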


This result will be stated more precisely and proved as Theorems \ref{mr} and \ref{mr2} below. The main difficulty lies in the fact that for interacting particle systems such as ASEP, some of the jumps are suppressed due to the fact that particles are not allowed to jump onto other particles. To prove the result, one may convince themselves that it is somehow necessary to keep track of the noise as well as the height profile in the limit, not just the latter. At first glance, one might try to show that $(h^{1,\epsilon},h^{2,\epsilon},\xi^\epsilon)$ converge jointly as $\epsilon\to 0$ to $(h^1,h^2,\xi)$, where $h^{i,\epsilon}$ are the rescaled, renormalized, and basically coupled height profile as described earlier, where $\xi^\epsilon$ keeps track of the Poisson clocks which excite the particles to jump, and where $h^1,h^2$ both solve the KPZ equation with the same noise $\xi$. Unfortunately, this approach is bound to fail because approximately half of the Poisson clocks go unused by the system due to suppressed jumps. In reality, the ``correct" discretization of the noise consists of only those Poisson clocks which are used by the system. But this depends intricately on the initial data of the system. In other words, there is no natural choice of $\xi^\epsilon$ above: there is always a $\xi^{1,\epsilon}$ associated with $h^{1,\epsilon}$ and likewise there is $\xi^{2,\epsilon}$ for $h^{2,\epsilon}$. And the primary technical task is to relate the $\xi^{i,\epsilon}$ for $i=1,2$, in particular to prove that these converge to the same noise in the limit. So one runs into a vicious cycle which creates a difficulty in the arguments.
\\
\\
In terms of applications of our theorem, one can recover a few results about how joint solutions of KPZ behave when run according to the same noise $\xi$. Here is just one example: consider the stochastic Burgers equation $$\partial_t u = \partial_x^2 u + \partial_x(u^2) +\partial_x\xi,$$ which is formally related to the KPZ equation by $u=\partial_x h$, and indeed one can define the solution this way interpreted in terms of distributions. Consider two solutions $u^1,u^2$ of stochastic Burgers driven by the same realization of $\xi,$ started from two initial data $u_0^1,u_0^2$ respectively. Suppose that the initial data are ordered, i.e., $u_0^1\leq u_0^2$ deterministically in the sense that $u_0^2-u_0^1$ is a positive Borel measure. Then Theorem \ref{mmrr} implies that $u^1(t,\bullet)\leq u^2(t,\bullet)$ almost surely for all $t,$ again interpreted in the sense that the difference is a positive measure. In other words, the KPZ dynamics preserve the property that the difference of height functions is nondecreasing. This is because the ordering is preserved at the level of the particle systems, see \eqref{A} below. This result can very likely be proved using other methods as well, for instance proving the result first for smooth noises $\xi$ (see for instance Section 3 of \cite{DGR21}) and then using an approximation of space-time white noise by spatial mollifications and using the fact that the desired result is stable under limits and that the associated solutions converge after height renormalization (see e.g. \cite{PR19}). One advantage in our discretization via ASEP is that the result is already obvious at the level of the particle system without using PDE techniques. %In the final subsection of the paper, we prove a version for boundary versions of ASEP. Another corollary is that any solution of the half-space KPZ equation started with inhomogeneous Neumann boundary parameter $A$ is dominated by that with boundary parameter $A'$ whenever $A\le A'$ (assuming one has the same initial data). The same result applies for the inhomogeneous Dirichlet-boundary stochastic Burgers equation. Analogous results hold for the Neumann-boundary KPZ equation on a bounded interval.
\\
\\
The input to proving our main theorem will require two steps. First we will prove a result (Theorem \ref{mon1} below) about nontrivially coupled KPZ's, which says that two solutions-in-law of the KPZ equation with the property that their difference has zero quadratic variation in the $x$ variable must in fact be driven by the same noise. This result may be of independent interest, and it will be the main tool to identify joint limit points of the coupled height functions. The other tool we will use is the monotonicity and attractivity properties of ASEP and related systems. It should be noted that our methods are easily generalizable to other types of basically coupled systems that satisfy these properties as well, such as joint convergence of the \textit{symmetric} simple exclusion process to the Edwards-Wilkinson fixed point as well as higher-spin processes for which KPZ fluctuations are known, such as ASEP($q,J$) \cite{CST18}. We discuss the latter model in Subsection 3.5. %In principle our method should also be applicable to discrete-time models such as \cite{CT17, CGST20, Lin20}, but defining the basic coupling for those models is beyond the scope of this result and we do not pursue it here. %We chose to focus on KPZ for its recent interest.
\\
\\
\textbf{Outline:} In Section 2, we prove a result about coupled solutions-in-law of the KPZ equation. In Section 3 we prove Theorem \ref{mmrr}. Subsection 3.1 introduces the basic coupling model and the notations, Subsection 3.2 describes the result of Bertini-Giacomin in some detail, Subsection 3.3 contains the proof of our main result in the case of deterministic initial data (Theorem \ref{mr}) and then Subsection 3.4 contains the main result for randomized initial data, Theorem \ref{mr2}. Subsection 3.5 then includes a discussion of how to generalize our argument to more complex models.
\\
\\
\textbf{Acknowledgements:} We thank Ivan Corwin for suggesting the problem. The author was partially supported by the Fernholz
Foundation’s “Summer Minerva Fellows” program, as well as summer support from
Ivan Corwin’s NSF grant DMS:1811143.

\section{A result about nontrivially coupled KPZ's}

To prove the main result, we use a continuum apparatus which allows us to efficiently identify joint limit points of the coupled particle system. To formulate our result we consider a slightly more general version of the KPZ equation with a parameter $\lambda \in \Bbb R$:
\begin{equation}\label{KPZ}\tag{KPZ}\partial_t h(t,x) = \partial_x^2 h(t,x) + \lambda (\partial_x h(t,x))^2 +\xi(t,x).
\end{equation} We use the notion of the so-called Hopf-Cole solution, which uses the fact that if $h$ solves \eqref{KPZ} then $Z:=e^{\lambda h}$ solves the multiplicative noise equation given by $\partial_tZ = \partial_x^2 Z+ \lambda Z\xi$ which actually turns out to be well-posed using classical methods from \cite{Wal86}. To make this rigorous, one formulates all of this using the Duhamel principle:

\begin{defn}[Hopf-Cole solution]\label{hopf}
Let $P(T,X) = \frac1{\sqrt{2\pi T}} e^{-X^2/2T},$ and let $\xi$ denote a space-time white noise on some probability space $(\Omega, \mathcal F, \Bbb P)$. Let $Z_0$ denote some (random) Borel measure on $\Bbb R$. We say that a continuous space-time process $h = (h(T,X))_{T > 0, X \in \Bbb R}$ is a solution of \eqref{KPZ} if $\Bbb P$-almost surely, for every $T>0$ and $X \in \Bbb R$, the process $Z(T,X):=e^{\lambda h(T,X)}$ satisfies the identity $$ Z(T,X) = \int_\Bbb R P(T, X-Y)Z_0(dY)+\lambda \int_0^T\int_\Bbb R P(T-S,X-Y) Z(S,Y) \xi(dS,dY),$$ where the integral against the white noise is meant to be interpreted in the Itô-Walsh sense \cite{Wal86}. 
\end{defn}

Next we will define the class of initial data for which our apparatus will be applicable. This class of functions will also be used extensively in later sections of the paper.

\begin{defn}\label{hold}
Let $\alpha,\delta \in (0,1).$ A function $f:\Bbb R\to \Bbb R$ is said to be in the $\delta$-weighted $\alpha$-Hölder space $\mathscr{C}^{\alpha}_{\delta}(\Bbb R)$ if $$\sup_{x \in \Bbb R} \frac{|f(x)|}{(1+|x|)^\delta}+\sup_{\substack{x,y\in \Bbb R\\|x-y|\leq 1}} \frac{|f(x)-f(y)|}{(1+|x|)^{\delta} |x-y|^{\alpha}} <\infty.$$
We turn $\mathscr C_\delta^\alpha$ into a Banach space by defining the norm of $f$ to be the above quantity.
\end{defn}

We are going to prove a result which roughly says that if we have two space-time processes defined on the same probability space, each solving \eqref{KPZ} in law, not necessarily driven by the same noise but their difference satisfies some specific nontrivial deterministic condition, then the two noises must in fact be the same.
%Whenever we have two cylindrical wiener processes $W^1,W^2$ defined on the same space, we define their joint filtration to be \begin{equation}\label{filt}\mathcal F_T := \sigma\big( \{ W^i_S(f): S \leq T, f \in L^2(\Bbb R), i=1,2\}\big).\end{equation}Whenever we refer to a ``solution" of \eqref{1}, it should be interpreted in terms of a regularity structure expansion as defined in \cite{Hai14}, renormalized with respect to the Wick ordering associated to the Gaussian structure induced by the solution of the underlying linearized equation. In special cases of interest such as $F(h)=0$ or $F(h)=h^2$, this may be equivalently interpreted as a mild solution to an appropriately linearized version of the equation as defined through the more classical theory of \cite{Wal86}. The initial data of any such ``solution" will always assumed to be reasonable enough so that it can be sensibly convolved with the heat kernel. As such, we cannot consider certain physically meaningful initial data such as the infinite wedge, but this will be mediated in Section 3 using additional ideas.

\begin{thm}\label{mon1}
Suppose we have two standard space-time white noises $\xi^1,\xi^2$ coupled onto the same probability space. Suppose furthermore that they satisfy the following conditions:
\begin{enumerate}
    
    \item $\Bbb E[(\xi^1,f) (\xi^2,g)] = 0$ for all $f,g\in L^2(\Bbb R_+\times \Bbb R)$ which have disjoint supports.
    
    \item For every $t> 0$ the spatial process $h^2(t,\cdot) - h^1(t,\cdot)$ has a.s. finite $p$-variation for some $p<2$, where $h^i$ is a solution of $\partial_t h^i = \partial_x^2 h^i + \lambda_i(\partial_xh^i)^2+ \xi^i,$ for $i=1,2.$ Here $\lambda_1,\lambda_2 \in \Bbb R$ and furthermore we assume that the initial data $h^i(0,\cdot) \in \mathscr C_\delta^\alpha$ for some $\alpha,\delta \in (0,1).$
\end{enumerate}
Then $\xi^1 = \xi^2$.
\end{thm}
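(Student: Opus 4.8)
The plan is to exploit the Hopf-Cole transform to reduce everything to a statement about the stochastic heat equation (SHE), and then extract the noise from the solution by a quadratic-variation/martingale argument. Write $Z^i = e^{\lambda_i h^i}$, so that $Z^i$ solves $\partial_t Z^i = \partial_x^2 Z^i + \lambda_i Z^i \xi^i$ in the Itô-Walsh sense with initial data $Z^i_0 = e^{\lambda_i h^i(0,\cdot)}$; since $h^i(0,\cdot) \in \mathscr{C}_\delta^\alpha$, the datum $Z^i_0$ is a positive continuous function bounded above and below by stretched-exponential weights, so the standard theory of \cite{Wal86} (and its extensions to such growth) gives a unique positive continuous solution with strictly positive values, hence $h^i = \lambda_i^{-1}\log Z^i$ is genuinely well-defined and continuous. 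The first step is therefore to record that each $Z^i$ is a bona fide Walsh solution driven by $\xi^i$, and that for fixed $t>0$ the spatial path $x \mapsto Z^i(t,x)$ is $\beta$-Hölder for every $\beta < 1/2$ but has a nontrivial quadratic variation structure inherited from the noise.

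The key step is to recover $\xi^i$ from $Z^i$ via Itô-Walsh calculus. By the Duhamel/martingale representation, for a fixed smooth spatial test function $\phi$ and $s<t$, the quantity $M^i_\phi(t) := \int_{\Bbb R} Z^i(t,x)\phi(x)\,dx - \int_{\Bbb R} Z^i(s,x)\phi(x)\,dx - \int_s^t \int_{\Bbb R} Z^i(r,x)\partial_x^2\phi(x)\,dx\,dr$ is a continuous $L^2$ martingale in $t$ with predictable quadratic variation $\lambda_i^2 \int_s^t \int_{\Bbb R} Z^i(r,x)^2 \phi(x)^2\,dx\,dr$, and the cross-variation of $M^1_\phi$ and $M^2_\psi$ is $\lambda_1\lambda_2 \int_s^t\int_{\Bbb R} Z^1(r,x)Z^2(r,x)\phi(x)\psi(x)\, d\langle \xi^1,\xi^2\rangle$. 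Thus $\xi^i$ is measurable with respect to the pair $(Z^i, Z^i_0)$ — in fact one can write $\xi^i(dr,dx) = (\lambda_i Z^i(r,x))^{-1}\big( \text{``}dZ^i\text{''} - \partial_x^2 Z^i\,dr\big)$ rigorously by testing against $\phi$ — and to prove $\xi^1 = \xi^2$ it suffices to prove that the martingale parts agree, i.e. that the cross-variation of $M^1_\phi$ and $M^2_\phi$ equals its own "diagonal" value, equivalently $\langle \xi^1 - \xi^2, \xi^1-\xi^2\rangle = 0$ as a measure. Here is where hypothesis (2) enters: the difference $h^2(t,\cdot) - h^1(t,\cdot) = \lambda_2^{-1}\log Z^2(t,\cdot) - \lambda_1^{-1}\log Z^1(t,\cdot)$ has finite $p$-variation with $p<2$, hence zero quadratic variation on every interval; expanding the quadratic variation of this difference via the bilinearity of the (joint) bracket and the Itô-Walsh chain rule, and using that the $Z^i$ are bounded away from $0$ on compacts, one gets that the space-time measure $\int_0^t\int \big(\xi^1(dr,dx) - \xi^2(dr,dx)\big)^2$-type quantity, localized by a test function, vanishes. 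Concretely: the quadratic variation in $x$ of $h^2(t,\cdot)-h^1(t,\cdot)$ over $[a,b]$ should compute — via a Kolmogorov-type argument combining the temporal martingale structure with spatial regularity of SHE, or via approximating the spatial increments and using the Clark-Ocone/stochastic-sewing heuristic — to a nonnegative multiple of the "mutual energy" of $\xi^1-\xi^2$ over the relevant region, forcing that energy to be zero, hence $\xi^1 = \xi^2$ as distributions.

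The main obstacle I expect is making rigorous the link between the \emph{spatial} quadratic variation of $h^2(t,\cdot)-h^1(t,\cdot)$ at a fixed time $t$ and the \emph{space-time} bracket $\langle \xi^1-\xi^2\rangle$: the former is a pathwise/fixed-time object about the $x$-marginal of the solution, while the Itô-Walsh calculus naturally produces brackets in the time variable. Bridging these requires understanding how the noise in a space-time window $[t-h,t]\times[x,x+\sqrt{h}]$ is "encoded" in the spatial increment $Z(t,x+\sqrt h)-Z(t,x)$ — essentially the local CLT / near-diagonal behavior of the heat kernel, which says the spatial increment of SHE over a scale-$\sqrt h$ window has a Gaussian fluctuation of order $h^{1/4}$ whose law is (asymptotically) that of the noise integrated against a profile concentrated in that parabolic box. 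One would make this precise by a second-moment computation on $\Bbb E[(Z(t, x+\delta) - Z(t,x))^2]$ and its cross version, isolating the leading $\delta^{1/2}$ term and identifying its coefficient with $\langle \xi^1-\xi^2\rangle$; hypothesis (1) (product structure / no correlation across disjoint supports) is what guarantees the cross term has the same local structure so that "$p$-variation $<2$" genuinely forces the coefficient to vanish rather than merely be finite. The remaining steps — positivity and regularity of $Z^i$, the martingale problem characterization, and measurability of $\xi^i$ with respect to $Z^i$ — are standard consequences of \cite{Wal86} and will be handled quickly.
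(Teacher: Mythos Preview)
Your intuition that hypothesis~(2) forces the ``mutual energy'' of $\xi^1-\xi^2$ to vanish via a spatial quadratic-variation computation is correct, and you have correctly located the central obstacle: linking the \emph{spatial} quadratic variation of the difference at a fixed time $t$ to a \emph{space-time} quantity built from $\xi^1-\xi^2$. But the machinery you propose does not resolve that obstacle, and the paper takes a rather different route.

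The paper does \emph{not} work through the Hopf--Cole transform or the multiplicative SHE. Instead it uses the additive decomposition $h^i = X^i + v^i$, where $X^i$ solves the \emph{linear} equation $\partial_t X^i = \partial_x^2 X^i + \xi^i$ with zero initial data and $v^i$ is a remainder that is spatially H\"older of exponent strictly greater than $1/2$; this decomposition is a nontrivial input from the paracontrolled solution theory of the full-line KPZ equation \cite{PR19} (and is the only place the restriction $h^i(0,\cdot)\in\mathscr C_\delta^\alpha$ is used). Then $Y := X^2 - X^1 = (h^2-h^1) + (v^1 - v^2)$ inherits finite $p$-variation with $p<2$ from both bracketed terms. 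Crucially, $Y$ is a \emph{Gaussian} process in $(\xi^1,\xi^2)$, so second moments are computable in closed form --- something your multiplicative route cannot offer, since the ``divide by $\lambda_i Z^i$'' step makes the relevant increments non-Gaussian and their second moments inaccessible.

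The bridge you are missing is then built in two pieces. First, hypothesis~(1) is used concretely: the bilinear form $(\phi,\psi)\mapsto \Bbb E[(\xi^1,\phi)(\xi^2,\psi)]$ is bounded with norm $\le 1$, and its vanishing on disjointly supported pairs forces it to be multiplication by some $\mathbf v\in L^\infty(\Bbb R_+\times\Bbb R)$ with $|\mathbf v|\le 1$ a.e.; the goal becomes $\mathbf v=1$ a.e. Second --- and this replaces your ``Clark--Ocone/stochastic-sewing heuristic'' --- writing $Y = p*(\xi^1-\xi^2)$ one computes the expected dyadic quadratic-variation sum $\Bbb E[Q_N(t)]$ \emph{exactly} as $2\int_0^t\int_{\Bbb R}\big[\sum_k (p_{t-s}(x_k-z)-p_{t-s}(x_{k+1}-z))^2\big](1-\mathbf v(s,z))\,dz\,ds$. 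One shows separately (via a uniform-integrability argument, not just a.s.\ convergence) that $\Bbb E[Q_N(t)]\to 0$. The heat-kernel bracket concentrates at the parabolic scale near $(t,z)$ with a quantitative lower bound, and the Lebesgue differentiation theorem then forces $\int_0^1(1-\mathbf v(t,z))\,dz=0$ for a.e.\ $t$, hence $\mathbf v=1$. Your proposal gestures at this parabolic-box localization but supplies neither the additive-Gaussian reduction nor the $L^\infty$-multiplier/Lebesgue-differentiation argument, and without those the obstacle you flagged remains open.
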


We remark that the two noises are \textbf{not} assumed to be \textit{jointly} Gaussian. This will be important while applying the theorem later.

\begin{proof}
Define a bilinear form $I$ on $L^2(\Bbb R_+ \times \Bbb R)$ by $I(\phi,\psi) := \Bbb E[(\xi^1,\phi) (\xi^2,\psi)].$ By Cauchy-Schwarz
$$|I(\phi,\psi)| \leq \Bbb E[(\xi^1,\phi)^2]^{1/2}\Bbb E[(\xi^2,\psi)^2]^{1/2} = \|\phi\|_2\|\psi\|_2.$$
Thus $I$ is bounded, so by Riesz representation theorem there exists some bounded operator $A: L^2(\Bbb R_+ \times \Bbb R) \to L^2(\Bbb R_+ \times \Bbb R)$ such that $I(\phi,\psi) = \langle \phi, A\psi\rangle_{L^2(\Bbb R_+ \times \Bbb R)}$ and $\|A\|\leq 1$.
\\
\\
Note that $\langle \phi, A\psi \rangle_{L^2(\Bbb R_+\times \Bbb R)} =0$ whenever $\phi$ and $\psi$ have disjoint supports on $\Bbb R_+\times \Bbb R$. The reader may show that any operator on an $L^2$ space (associated with a sigma finite measure) which satisfies this property is necessarily a multiplication operator. Thus there exists some $\mathbf v \in L^{\infty} (\Bbb R_+ \times \Bbb R)$ such that $A\phi = \mathbf v \cdot \phi$ for all $\phi \in L^2 (\Bbb R_+ \times \Bbb R)$. Note that $\|\mathbf v\|_{L^{\infty}(\Bbb R_+ \times \Bbb R)} = \|A\| \le 1$.
\\
\\
We have shown that if $\phi, \psi \in L^2(\Bbb R_+ \times \Bbb R)$ then 
\begin{equation}\Bbb E[(\xi^1,\phi) (\xi^2,\psi)] = \int_{\Bbb R_+\times \Bbb R} \phi(t,x)\psi(t,x)\mathbf v(t,x)dt\;dx,\label{v}
\end{equation}
where $|\mathbf v(t,x)|\le 1$ a.e. Note that $\xi^1, \xi^2$ have \textit{not} been shown or assumed to be jointly Gaussian. Our goal is now to show that $\mathbf v=1$ a.e. on $\Bbb R_+ \times \Bbb R$.
\\
\\
For $i=1,2$ we define $X^i(t,x)$ for $t \ge 0$ and $x \in \Bbb R$ as the solution of the linear SPDE $$\partial_t X^i = \partial_x^2 X^i + \xi^i,$$ with $X^i(0,x)=0$. Letting $h^i$ be as in the theorem statement, we can write $h^i(t,x) = X^i(t,x) + v^i(t,x),$ where $h^i_0(x) = h^i(0,x)$ and $v^i$ is a remainder term which is locally Holder continuous of exponent strictly greater than $1/2$ in the spatial variable. For the KPZ equation on the \textit{circle} $\Bbb T$, the existence of such a remainder term $v^i$ was first proved as Theorem 1.10 in \cite{Hai13} using a preliminary version of the theory of regularity structures. We believe that the result on the \textit{full line} $\Bbb R$ (which is what we need) can also be proved using regularity structures, however it has not been done in the literature thus far (in the introduction of \cite{HL18}, there is a discussion of the difficulties involved with making direct sense of the full-line KPZ equation). However, the full line result can instead be deduced from Definitions 3.2, 3.3, and Theorem 3.19 in \cite{PR19} which uses the theory of paracontrolled products to make direct sense of the full-line KPZ equation. The fact the notion of solution used there coincides with the Hopf-Cole solution also follows Theorem 3.19 there. However, that theorem assumes that the initial data lie in $\mathscr C^\alpha_\delta$ (see Assumptions 3.7 and Remark 3.8 in \cite{PR19}) which is the only reason we have assumed such a restriction on the class of initial data in this theorem and in later parts of this paper. This assumption can likely be relaxed, but it does not seem to have been done in the literature thus far.
\\
\\
Now let $Y:= X^2-X^1$. Then $$Y(t,x) = \big[h^2(t,x) - h^1(t,x)\big] + \big[ v^1(t,x) - v^2(t,x) \big].$$
By assumption, for each fixed $t>0$, each of the two terms in the square brackets have a.s. finite $p$-variation in the $x$ variable, for some $p<2$ (since the $v^i$ are spatially Holder continuous of exponent strictly greater than $1/2$). Thus, $Y$ has a.s. finite $p$-variation in the $x$ variable.
\\
\\
Define a sequence of random variables 
$$ Q_N(t):= \sum_{k=1}^{2^N} \big(Y(t,2^{-N}k) - Y(t,2^{-N}(k+1))\big)^2.$$ Since $Y$ is of finite $p$-variation in the $x$ variable with $p<2$, and since $Q_N$ is approximating the \textit{quadratic} variation, it follows that $Q_N(t) \to 0 $ almost surely as $N \to \infty$. We claim that $\Bbb E[Q_N(t)] \to 0$ as well. To prove this, it suffices to show that $\sup_N\Bbb E[Q_N(t)^q]<\infty$
for some $q>1$, as that implies uniform integrability. To show this uniform $L^q$ bound, note that $(a-b)^2 \leq 2a^2+2b^2$ for all $a,b$, and  recall that $Y=X^1-X^2$. Therefore 
$$Q_N(t) \leq 2 \sum_{k=1}^{2^N} \sum_{i=1,2} \big(X^i(t,2^{-N}k) - X^i(t,2^{-N}(k+1))\big)^2,$$ 
so that 
\begin{align*}\Bbb E[Q_N(t)^q] &\leq 2 \cdot 2^{(N+1)(q-1)} \sum_{k=1}^{2^N}\sum_{i=1,2}\Bbb E[\big|X^i(t,2^{-N}k) - X^i(t,2^{-N}(k+1))\big|^{2q}] \\&= 4 \cdot 2^{(N+1)(q-1)} \sum_{k=1}^{2^N} \Bbb E[\big|X^1(t,2^{-N}k) - X^1(t,2^{-N}(k+1))\big|^{2q}]\\ &= 2^{Nq+q+1} \Bbb E[\big|X^1(t,2^{-N}) - X^1(t,0))\big|^{2q}] \\ &\leq C_q \cdot 2^{Nq} \Bbb E\big[\big(X^1(t,2^{-N}) - X^1(t,0))\big)^2\big]^q
\end{align*}
Here the first inequality is obtained by using Hölder (or Jensen) on the double sum from the previous expression, which allows us to bring the $q^{th}$ power inside the sum at a cost of an extra factor $2^{(N+1)(q-1)}$. The equality in the second line holds because $X^1$ and $X^2$ have the same distribution as space-time fields, so the sum over $i=1,2$ simply doubles the expectation of the $i=1$ case. The equality in the third line holds because $X^1$ is stationary in $x$ (recall that it was started from zero initial data) and thus the terms in the sum do not depend on $k$. In the last inequality $C_q$ is a constant depending on $q$ but not $N$, and it holds because $X^1(t,2^{-N}) - X^1(t,0)$ has a centered normal distribution, and thus satisfies the standard ``reverse Jensen" bounds. With all of this in place, we just need to show that $\Bbb E[\big(X^1(t,2^{-N}) - X^1(t,0))\big)^2] \leq C2^{-N}$. But this is standard, see for instance Section 2.3 of \cite{Hai09} for a precise computation which shows that $\Bbb E[(X^1(t,x) - X^1(t,y))^2] \leq C|x-y|$ where $C$ is independent of $t,x,y$. Thus we have shown that $\Bbb E[Q_N(t)] \to 0$ as $N \to \infty$.
\\
\\
Recall that the goal is to show that $\mathbf v=1$ a.e. To do this, we will now compute $\Bbb E[Q_N(t)]$ in a different manner using $\mathbf v$. We can write $Y$ in mild form as $Y_t =p * (\xi^1-\xi^2),$ where $*$ denotes space-time convolution and $p$ is the standard heat kernel as always. Thus, by using \eqref{v} we see that
\begin{equation}\label{EQ}\Bbb E[Q_N(t)] = 2\int_0^t \int_{\Bbb R} \bigg[ \sum_{k=1}^{2^N} \big( p_{t-s} (x_k-z)-p_{t-s}(x_{k+1}-z) \big)^2\bigg] \big(1-\mathbf v(s,z) \big) dz\; ds,\end{equation}
where $x_k := k\cdot 2^{-N}.$ Now we will show that the limit of this quantity is strictly positive for some $t > 0$ unless $1-\mathbf v$ vanishes a.e. on $\Bbb R_+ \times \Bbb R$. The only major difficulty is that $\mathbf v$ has $L^{\infty}$ regularity at best, and the part of the integrand in the square brackets is converging weakly as $N \to \infty$ to a measure which is singular with respect to 2D Lebesgue measure, so taking a limit of the above integral is somewhat tricky and will involve using the Lebesgue differentiation theorem from measure theory. Define 
$$\alpha:= \min_{\substack{t \in [1,2]\\ x\in [1,3]}} (p_t(x-1)-p_t(x+1)) > 0.$$
By using the relation $p_t(x) = \epsilon^{-1}p_{\epsilon^{-2} t}(\epsilon^{-1}x)$, valid for all $\epsilon,t>0$ and $x\in \Bbb R$, we see that $$\min_{\substack{t \in [\epsilon^2, 2\epsilon^2]\\ x \in [\epsilon ,3\epsilon ]}} (p_t(x-\epsilon )-p_t(x+\epsilon)) = \epsilon^{-1}\alpha,\;\;\;\; \text{ for all } \epsilon>0. $$
Thus $\big(p_t(x-\epsilon)-p_t(x+\epsilon)\big)^2 \ge \alpha^2 \epsilon^{-2} \big(1_{[\epsilon,3\epsilon]}+1_{[-3\epsilon,-\epsilon]})(x), $ for all $t \in [\epsilon^2, 2\epsilon^2].$ Taking $\epsilon= 2^{-N-1}$, we see that 
\begin{equation}\label{EQ2}\sum_{k=1}^{2^N} \big( p_{t-s} (x_k-z)-p_{t-s}(x_{k+1}-z) \big)^2 \geq 4^{N+1} \alpha^2, \end{equation}
whenever $s \in [t-2\cdot 4^{-N-1}, t-4^{-N-1}]$ and $z \in [0,1]$. Here $x_k=2^{-N}k$ as always.
\\
\\
For $t \ge 0$ define $u(t): = \int_0^1 \int_0^t (1-\mathbf v(s,z)) ds dz$. By combining \eqref{EQ} and \eqref{EQ2}, we see that $$\Bbb E[Q_N(t)] \geq 4^{N+1} \alpha^2 \big( u(t-4^{-N-1})-u(t-2\cdot 4^{-N-1})\big).$$
By the Lebesgue differentiation theorem for nicely shrinking sets (see Theorem 3.21 in \cite{Fol}), there exists a measure zero zet $S \subset [0,\infty)$ such that for $t \notin S$, the right side of the last expression converges as $N \to \infty$ to $\alpha^2 u'(t) = \alpha^2 \int_0^1 (1-\mathbf v(t,z))dz.$ But we know that $\Bbb E[Q_N(t)] \to 0$ for every $t$, so we have shown that $u'(t)=0$ for all $t \notin S$. Thus $u(t) = u(0) = 0$. Since $\mathbf v \le 1$, this implies that $\mathbf v(s,z) = 1$ for a.e. $(s,z) \in [0,\infty) \times [0,1].$ Of course, there is nothing special about the interval $[0,1]$ here. By changing the definition of $Q_N(t)$ so that the sum ranges over all $k$ from $\lfloor 2^N a\rfloor$ to  $\lfloor 2^N b\rfloor$, we can obtain the same result on $[0,\infty) \times [a,b] $ for any real numbers $a<b$. 
\\
\\
We conclude that $\mathbf v = 1$ a.e. Thus by \eqref{v} we see that $\Bbb E[(\xi^1-\xi^2,\phi)^2] = 0$ for all $\phi\in L^2(\Bbb R_+\times \Bbb R)$, and thus $\xi^1=\xi^2$.
\end{proof}

%In retrospect, the proof consisted of analyzing the solution $Y$ of the SPDE given by $\partial_tY = \partial_x^2 Y + \eta$ (where in this case $\eta= \xi^1-\xi^2$) and then showing that a solution $Y$ can never have zero quadratic variation in the spatial variable if $\Bbb E[(\eta,\phi)(\eta,\psi)]=0$ for all disjointly supported $\phi,\psi\in L^2(\Bbb R_+\times \Bbb R)$. Stated in this way, the theorem becomes intuitively quite obvious (given the well-developed parabolic regularity theory) despite the long proof. Nevertheless, the theorem will be very useful in identifying joint limit points of the coupled interacting particle systems in the next section.

Recall that a \textit{cylindrical Wiener process} is a family of Brownian motions $W_T(f)$, indexed by $f \in L^2(\Bbb R)$, defined on the same probability space, and satisfying $$\Bbb E[W_T(f) W_S(g)] = (S\wedge T)\langle f,g\rangle_{L^2(\Bbb R)}.$$ for all $f,g \in L^2$. Any space-time white noise $\xi$ defines a cylindrical Wiener process $W$, and vice versa, so the two may be viewed as equivalent objects \cite{Hai09}.
\begin{cor}\label{cor}
Suppose we have two standard cylindrical Wiener processes $W^1,W^2$ coupled onto the same probability space. Suppose furthermore that they satisfy the following conditions:
\begin{enumerate}
    
    \item $\Bbb E[W^1_T(f) W^2_T(g)] = 0$ for all $T \ge 0$ and all $f,g\in L^2(\Bbb R)$ which have disjoint supports.
    
    \item For $f,g \in L^2(\Bbb R)$, the processes $(W^1_T(f))_{T \ge 0}$ and $(W_T^2(g))_{T \ge 0}$ are both martingales with respect to their \textbf{joint} filtration.
    
    \item For every $t> 0$ the spatial process $h^2(t,\cdot) - h^1(t,\cdot)$ has a.s. finite $p$-variation for some $p<2$, where $h^i$ is a solution of $\partial_t h^i = \partial_x^2 h^i + F^i(\partial_xh^i)+ dW^i.$ Here $F^1,F^2$ are admissible nonlinearities as mentioned above.
\end{enumerate}
Then $W^1 = W^2$.
\end{cor}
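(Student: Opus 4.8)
The plan is to deduce Corollary \ref{cor} from Theorem \ref{mon1} by passing through the canonical equivalence between cylindrical Wiener processes and space-time white noises. Let $\xi^1,\xi^2$ be the white noises attached to $W^1,W^2$ on the common probability space, normalized so that $W^i_T(f) = (\xi^i,1_{[0,T]}\otimes f)$; the joint filtration of $(W^1,W^2)$ then coincides with that of $(\xi^1,\xi^2)$. Hypothesis (3) of the corollary is precisely hypothesis (2) of Theorem \ref{mon1}, once one records that for an admissible nonlinearity $F^i$ the solution still admits the Hopf--Cole-type splitting $h^i = X^i + v^i$ with $\partial_tX^i = \partial_x^2X^i + \xi^i$, $X^i(0,\cdot)\equiv 0$, and $v^i$ spatially H\"older of exponent $>1/2$ (and with $h^i(0,\cdot)\in\mathscr C^\alpha_\delta$), which is exactly what the notion of admissibility is set up to guarantee. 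So the entire task is to verify hypothesis (1) of Theorem \ref{mon1}: that $\Bbb E[(\xi^1,f)(\xi^2,g)] = 0$ whenever $f,g\in L^2(\Bbb R_+\times\Bbb R)$ have disjoint supports. Granting this, Theorem \ref{mon1} yields $\xi^1=\xi^2$, i.e.\ $W^1=W^2$.

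To verify that space-time orthogonality, the key observation is that hypothesis (2) of the corollary (the joint-filtration martingale property) promotes the ``equal-time'' orthogonality of hypothesis (1) to an ``all-times'' one: if $f,g\in L^2(\Bbb R)$ have disjoint spatial supports and $S\le T$, then conditioning on $\mathcal F_S$ and using that $W^2(g)$ is a martingale gives $\Bbb E[W^1_S(f)W^2_T(g)] = \Bbb E[W^1_S(f)W^2_S(g)] = 0$, and the case $S\ge T$ is symmetric. Now the bilinear form $J(\phi,\psi):=\Bbb E[(\xi^1,\phi)(\xi^2,\psi)]$ is bounded on $L^2(\Bbb R_+\times\Bbb R)$ by Cauchy--Schwarz, exactly as at the start of the proof of Theorem \ref{mon1}. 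Hence by bilinearity and a density argument it suffices to show $J(1_R,1_{R'})=0$ for two disjoint rectangles $R=(a,b]\times(c,d]$, $R'=(a',b']\times(c',d']$, and two such rectangles are disjoint precisely when $(c,d]\cap(c',d']=\emptyset$ or $(a,b]\cap(a',b']=\emptyset$. In the first (spatially disjoint) case, writing $(\xi^1,1_R) = W^1_b(1_{(c,d]})-W^1_a(1_{(c,d]})$ and $(\xi^2,1_{R'}) = W^2_{b'}(1_{(c',d']})-W^2_{a'}(1_{(c',d']})$ and expanding the product, all four cross-terms vanish by the all-times orthogonality just established. In the second (temporally disjoint) case, say $b\le a'$, consider the martingale $N_T:=W^2_{b'\wedge T}(1_{(c',d']})-W^2_{a'\wedge T}(1_{(c',d']})$, which vanishes on $[0,a']\supseteq[0,b]$, so $N_b=0$; since $M:=(\xi^1,1_R)=W^1_b(1_{(c,d]})-W^1_a(1_{(c,d]})$ is $\mathcal F_b$-measurable, the tower property gives $\Bbb E[MN_\infty] = \Bbb E[M\,\Bbb E[N_\infty\mid\mathcal F_b]] = \Bbb E[M\,N_b] = 0$, and the case $b'\le a$ is symmetric. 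Thus $J$ vanishes on all pairs of disjoint rectangles.

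It remains to pass from disjoint rectangles to arbitrary $f,g$ with disjoint supports, which is routine. Truncating and using monotone convergence reduces to the case where $A=\{f\neq 0\}$ and $B=\{g\neq 0\}$ are bounded and of finite measure; by inner regularity pick compacts $K\subseteq A$, $K'\subseteq B$ with $|A\setminus K|,|B\setminus K'|$ arbitrarily small, and since $K$ and $K'$ are disjoint compacts they can be enclosed in disjoint finite unions of rectangles. One then approximates $f$ and $g$ in $L^2$ by simple functions supported, respectively, in these two disjoint finite unions of rectangles; since such approximants have disjoint supports, boundedness of $J$ together with the previous paragraph gives $J(f,g)=0$. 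The conceptual heart of the argument is the second paragraph: the joint-filtration martingale assumption is exactly what converts same-time spatial orthogonality into the genuine space-time orthogonality that Theorem \ref{mon1} demands, and I expect that conversion---rather than the bookkeeping in the density step---to be the point requiring the most care in the write-up.
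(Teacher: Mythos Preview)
Your proof is correct and follows the same approach as the paper: pass to the associated white noises $\xi^i$, use the joint-filtration martingale hypothesis to upgrade the equal-time spatial orthogonality of hypothesis (1) to full space-time orthogonality, and then invoke Theorem \ref{mon1}. The paper's version is terser---it records the increment identity $\Bbb E[(W^1_T(f)-W^1_S(f))(W^2_T(g)-W^2_S(g))] = \Bbb E[W^1_T(f)W^2_T(g)] - \Bbb E[W^1_S(f)W^2_S(g)]$ and then defers the rest to ``approximation by elementary integrands''---whereas you make the rectangle case-split (spatially versus temporally disjoint) and the density argument explicit, but the substance is identical.
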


\begin{proof}Define $\xi^1,\xi^2$ to be the random elements of $\mathcal S'(\Bbb R_+\times \Bbb R)$ such that $$(\xi^i, \phi) : = \int_0^\infty \langle \phi(T,\cdot), dW_T^i \rangle_{L^2(\Bbb R)}; \text{ for all } \phi \in \mathcal S(\Bbb R_+\times \Bbb R).$$ 
Note that $\Bbb E[(\xi^i,\phi)^2] = \|\phi\|_{L^2(\Bbb R_+\times \Bbb R)}^2$ so the $\xi^i$ are space-time white noises and we can stochastically extend the definition of $(\xi^i,\phi)$ to all $\phi \in L^2(\Bbb R_+\times \Bbb R)$.
\\
\\
Let $f,g \in L^2(\Bbb R)$. Since $W^1(f)$ and $W^2(g)$ are martingales in their \textit{joint} filtration we see that
$$\Bbb E\big[\big(W_T^1(f)-W_S^1(f)\big)\big(W_T^2(g)-W_S^2(g)\big)\big] = \Bbb E[ W^1_{T}(f) W^2_{T}(g)] - \Bbb E[ W^1_{S}(f) W^2_{S}(g)],$$ which equals zero whenever $f,g$ have disjoint supports. From this it follows (using approximation by elementary integrands) that $\Bbb E[(\xi^1,\phi)(\xi^2,\psi)]=0$ whenever $\phi$ and $\psi$ have disjoint supports on $\Bbb R_+\times \Bbb R$. Thus the conditions of Theorem \ref{mon1} are satisfied, so $\xi^1=\xi^2$, i.e., $W^1=W^2$.
\end{proof}

\section{Proof of the main theorem}

We will now derive some consequences of Theorem \ref{mon1} in the context of interacting particle systems. In particular we will prove Theorem \ref{mmrr} in the case of \cite{BG97} and \cite{CST18}. Although our results are for WASEP, they can be extended quite easily to some other systems, so we  describe in some generality a class of particle systems that we use.

\subsection{The basic coupling, height functions, and notation}

Although we consider ASEP for most of the paper, we would like to describe some extensions to more complicated models in later subsections. Thus we give a slightly more general description of the types of processes that are covered by our result.
\\
\\
In order to describe our result in full generality, fix $J \in \Bbb N$ and consider a function $b:\{-1,1\}\times \{0,...,J\}^2 \to [0,1]$. We consider Feller processes on the state space $S:= \{0,...,J\}^{\Bbb Z}$ which are described by the following dynamics. Each ordered pair $(x,x+1)$ and $(x,x-1)$ has a Poisson clock of rate 1. Every time the clock associated to $(x,y)$ rings, one particle jumps from $x$ to $y$ with probability $b(y-x,\eta(x),\eta(y))$ and stays there with probability $1-b(y-x,\eta(x),\eta(y)).$ However, the jump is suppressed if there is no particle at $x$, or if there are already $J$ particles at $y$ (equivalently we can just impose that $b(i,0,\cdot) = 0 = b(i,\cdot,J)$ for all $i=-1,1$). The pre-generator of such a process acts on local functions $f$ by the formula
\begin{equation}\label{gen}
    Lf(\eta) = \sum_{x,y\in\Bbb Z:|x-y|=1} b(y-x,\eta(x),\eta(y)) \big( f(\eta+e_y-e_x)-f(\eta)\big),
\end{equation}
where $e_x(z)=1_{\{x=z\}},$ and $f:S \to \Bbb R$ 
is some local function. This process is called a \textit{nearest-neighbor generalized-misanthrope process} if $b$ is increasing in the $\eta(x)$ variable and decreasing in the $\eta(y)$ variable. Examples include ASEP and more generally ASEP$(q,j)$ as considered in \cite{CST18}. See Subsection 3.5 for more on the latter.
\\
\\
For nearest-neighbor generalized misanthrope processes there is a natural way to run the dynamics associated to several initial data coupled together. This is usually called the \textbf{basic coupling. }Specifically for $x\in \Bbb Z$ we associate to each directed bond $(x,x+1)$ and $(x,x+1)$ Poisson clocks of rate one, as well as iid uniform random variables $\{U_i(x,x+1)\}_{i \ge 1}$ and $\{U_i(x+1,x)\}_{i \ge 1}$ which are independent of the Poisson clocks on that bond. Whenever the $i^{th}$ Poisson clock associated to $(x,x+1)$ rings, a particle jumps from $x$ to $x+1$ only when $b(1,\eta(x),\eta(x+1))<U_i(x,x+1),$ and similarly for $(x,x-1)$ with $b(-1,\eta(x),\eta(x-1))$ and $U_i(x,x-1).$ In this way, we can define a Markov process on the product $S\times S$ of the individual state spaces which describes the evolution of two particle systems coupled so that each marginal onto $S$ is a Feller process with generator $L$ given above, and moreover (by the monotonicity properties of $b$) the two individual particle systems stay dominated for all time if they start dominated (see \eqref{A} below). When $J=1$ there is a straightforward way to describe the coupling without any uniform variables, instead using Poisson clocks of different rates on each bond. For the seminal work on coupled processes, see e.g. \cite{Lig74, Har72}. Our description of the basic coupling is in the spirit of \cite{Har72}, while \cite{Lig74} instead chooses to explicitly write the generator for the entire coupled system on the product space.
\\
\\
If $(\eta_t(x))_{t \ge 0}$ is a generalized misanthrope process on the state space $\{0,...,J\}$ then we define the \textit{height function} $$h_t(x):=\begin{cases}h_t(0)+\sum_{k=0}^x (2\eta_t(k)-J),&\;\;\;\;\; x \ge 0,\\ h_t(0)+\sum_{k=0}^{-x} (2\eta_t(-k)-J),&\;\;\;\;\; x < 0,\end{cases},$$ where $h_t(0)$ equals twice the current through the origin up to time $t$, i.e., twice the number of particles which have moved from the site $x=0$ to the site $x=1$ minus twice the number of particles which have moved from the site $x=1$ to the site $x=0$ up to time $t$.
\\
\\
The height functions associated to nearest-neighbor misanthrope processes have two useful properties. The dynamics preserve their ordering as well as the ordering of their spatial derivative:
\begin{align}
    &h_t^1(x) \leq h_t^2(x) \text{ for all } t\ge 0,x \ge 0 \text{ if }h_0^1(x) \leq h_0^2(x) \text{ for all }x \ge 0\label{M}\tag{M}, \\ &
    \eta_t^1(x) \leq \eta_t^2(x) \text{ for all } t\ge 0,x \ge 0 \text{ if }\eta_0^1(x) \leq \eta_0^2(x) \text{ for all }x \ge 0\label{A}\tag{A}.
\end{align}

Property \eqref{M} is usually called \textit{monotonicity} of the particle system, whereas property \eqref{A} is usually called \textit{attractivity} of the system. Both properties are easily proved by considering the action of a single jump excitation in the joint system. In terms of the SPDE limits, \eqref{M} says that the limiting height functions $h^1$ and $h^2$ are coupled so that $h^1 \leq h^2$ if $h^1(0, \cdot) \leq h^2(0,\cdot)$, and \eqref{A} says that $h^2(t,\cdot) - h^1(t,\cdot)$ is a nondecreasing function for every $t > 0$ if it is nondecreasing for $t=0$.
\iffalse
\begin{defn}\label{adm}
Let $C(\Bbb R)$ denote the space of continuous real-valued functions equipped with the metric $d(f,g)=\sum_{\ell \ge 1} 2^{-\ell} \max\{1, \sup_{x\in[-\ell,\ell]}|f(x)-g(x)|\}.$ We will say that a family $(f^{1,\epsilon},...,f^{k,\epsilon})_{\epsilon>0}$ of $C(\Bbb R)^k$-valued random variables is \textbf{admissible} if:
\begin{enumerate}
    \item For all $n \in \Bbb N$ and all $x_1,...,x_n \in \Bbb R$ one has joint convergence in law of $(f^{i,\epsilon}(x_j))_{(i,j)\in [k]\times [n]}$ to a limiting random variable as $\epsilon \to 0$.
    
    \item There exists some $\alpha \in (0,1/2]$ and some $p>1/\alpha$ such that $\| f^{i,\epsilon}(x)-f^{i,\epsilon}(y)\|_p \leq C|x-y|^{\alpha}$ whenever $|x-y|<1$, where $C,p,\alpha$ do not depend on $\epsilon, i,x,y$.
    
    \item There exists $a>0$ such that for all $x \in \Bbb R$, all $i\in[k]$, and all $\epsilon>0$ one has $\Bbb E[e^{|f^{i,\epsilon}(x)|} ] \le Ce^{a|x|}.$
    \end{enumerate}
\end{defn}

The point of the definition is that convergence in finite dimensional distributions can be upgraded to convergence in a space of continuous functions, since the Kolmogorov-Chentsov criterion guarantees tightness of the process by showing it lies in a local Hölder space of exponent $\alpha-1/p.$ Additionally, by Fatou's lemma the limiting function will satisfy the same exponential moment bounds (such exponential bounds are used in each of the papers \cite{BG97, CS18, CST18, DT16} due to the presence of the Hopf-Cole transform).
\fi
\\
\\
Let us now establish some notation. A function $h: \Bbb Z \to \Bbb Z$ is called \textbf{viable} if there is a particle system associated to it, in other words if $h_t(x+1)-h_t(x) \in \{-J,-J+2,...,J-2,J\}$ for all $x$. Likewise a function from $\Bbb R\to \Bbb R$ will be called viable if its restriction to $\Bbb Z$ is viable and if its value at non-integers is linearly interpolated from the two nearest integer values. An obvious but important property used below is that the class of admissible height functions is closed under the operations $\max$ and $\min$.
\\
\\
Given some collection $h^1,...,h^n:\Bbb R_+\times \Bbb Z\to \Bbb R$ of time-evolving height profiles, we will often define ``rescaled and renormalized" versions of them which converge in law to the solution of the KPZ equation. In all of these cases what we will mean is that there exist some constants $a_{\epsilon}, b_{\epsilon}$ such that \begin{equation}h^{i,\epsilon}(t,x): = a_{\epsilon} h^{i} (\epsilon^{-2}t,\epsilon^{-1}x) +b_{\epsilon}t \label{resc}\end{equation} converges in law to the solution of \eqref{KPZ}.
\\
\\
Whenever we have an evolving height function $h(t,x)$ in our model, we will denote by $h^{\epsilon}$ its rescaled and renormalized version converging to KPZ. Thus $h^{\epsilon}$ is a random function from $\Bbb R_+\times \epsilon \Bbb Z \to \Bbb R$ that depends on $\epsilon$ in three different ways: through the initial data which is generally changing with $\epsilon$, through the parameters of the model which are being weakly scaled as $p=\frac12+\frac12\sqrt\epsilon$ and $q=\frac12-\frac12\sqrt\epsilon$ (this will be explained below), and through the renormalization constants and diffusive scaling as in \eqref{resc}. Often we will have several height functions $h^1,h^2,...,h^n$ which are coupled via the same dynamics, we will denote their rescaled versions as $h^{1,\epsilon},h^{2,\epsilon},...,h^{n,\epsilon}$. We will use the capital letters $(H^1,H^2,...,H^n)$ to denote the \textit{joint} continuum limits of the rescaled fields $(h^{1,\epsilon},h^{2,\epsilon},...,h^{n,\epsilon}).$ Thus the $H^i$ are random continuous functions from $\Bbb R_+\times \Bbb R \to \Bbb R$ which are defined on the same probability space as each other. We will always use the subscript $0$ to denote the initial data both in the prelimit and the limit, i.e., $H^i_0 = H^i(0,\cdot), h^{i,\epsilon}_0=h^{i,\epsilon}(0,\cdot),$ and so on.
\\
\\
Often we will have some initial data $h_0^{1,\epsilon},h_0^{2,\epsilon},..., h_0^{k,\epsilon}$ and from these we will build more initial data $h_0^{k+1,\epsilon},...,h_0^{k+n,\epsilon}$. We will always denote by $h^{i,\epsilon}$ (i.e., without the zero subscript) to denote the evolution of the \textit{coupled} the process started from $h_0^{i,\epsilon}$. In other words, the dynamics of the newly constructed $h^{i,\epsilon}$ are always implicitly assumed to be driven by the same realization of the Poissonian clocks (and uniform variables, if $J>1$) as those of the original $h^{i,\epsilon}$.
\\
\\
Whenever we refer to ``convergence" of $(h^{i,\epsilon}_0)_{i=1}^k$ to $(H_0^i)_{i=1}^k$, we mean convergence in $C(\Bbb R)^k$, where $C(\Bbb R)$ is the space of continuous functions on $\Bbb R$ equipped with the the topology of uniform convergence on compacts, which is completely metrizable via the same metric $$d(f,g):= \sum_{n\ge 1} 2^{-n} \max\big\{1,\sup_{x\in[-n,n]}|f(x)-g(x)|\big\}.$$ Sometimes we use the stronger topology of $\mathscr C_\delta^\alpha(\Bbb R)$ from Definition \ref{hold} and we specify whenever we do this. When we refer to the convergence of the entire height profile $h^{i,\epsilon}$ to $H^i$, we mean in the Skorohod space $D([0,T], C(\Bbb R)^k)$ for every $T>0$.

\subsection{The convergence result of Bertini-Giaomin}

Throughout Subsections 3.2, 3.2, and 3.4 we consider ASEP, which corresponds in \eqref{gen} to the choices $J=1$, $b(1,1,0) = p$, and $b(-1,1,0)=q$ where $p,q\ge 0$. In our $\epsilon$-dependent model below, $p$ will be scaled as $\frac12+\frac12\epsilon^{1/2}$ while $q$ will be scales as $\frac12-\frac12\epsilon^{1/2}$.
\\
\\
The main result of \cite{BG97} can be formulated as follows. We would like to emphasize once again that the height functions considered in the theorem below depend on $\epsilon$ in three different ways: through the initial data which is generally changing with $\epsilon$, through the parameters of the model which are being weakly scaled as $p=\frac12+\frac12\epsilon^{1/2}$ and $q=\frac12-\frac12\epsilon^{1/2}$, and through the renormalization constants and diffusive scaling as in \eqref{resc}.

\begin{thm}[Theorem 2.3 of \cite{BG97}]\label{bg97} Let $h_0^\epsilon$ be a deterministic sequence of initial data such that $\epsilon^{1/2} h_0^\epsilon(\epsilon^{-1}x)$ converges in $\mathscr C_\delta^\alpha$ to some $H_0$, where $0<\alpha<1/2$ and $0<\delta<1$. Let $h^{\epsilon}$ denote the rescaled and renormalized height function as in \eqref{resc}, with $a_\epsilon = \epsilon^{1/2}$ and $b_\epsilon = \frac12\epsilon^{-1}+\frac1{24}.$ Then $h^{\epsilon}$ converges in law to the Hopf-Cole solution of \eqref{KPZ}. The initial data of the limiting object is given by the limit in $\mathscr C_\delta^\alpha$ of $\epsilon^{1/2} h_0^\epsilon(\epsilon^{-1}x)$. The convergence is obtained with respect to the topology of the Skorohod space $D([0,T], C(\Bbb R))$, for all $T>0$.
\end{thm}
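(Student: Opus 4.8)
The plan is to reprove this following the microscopic Hopf-Cole (G\"artner) transform strategy of \cite{BG97}. For an appropriate sequence $\lambda_\epsilon\to\lambda$ with $|\lambda_\epsilon|$ of order $\epsilon^{1/2}$, and an appropriate deterministic rate $\nu_\epsilon$, set $Z^\epsilon_t(x):=\exp(\lambda_\epsilon h_t(x)+\nu_\epsilon t)$ for the unrescaled height function, and let $Z^\epsilon(t,x)$ denote its diffusive rescaling. The algebraic miracle of \cite{BG97} is that, under the weak-asymmetry scaling $p=\tfrac12+\tfrac12\epsilon^{1/2}$, $q=\tfrac12-\tfrac12\epsilon^{1/2}$, the field $Z^\epsilon$ solves a \emph{closed} discrete stochastic heat equation $dZ^\epsilon_t(x)=(\Delta^\epsilon Z^\epsilon_t)(x)\,dt+Z^\epsilon_{t^-}(x)\,dM^\epsilon_t(x)$, where $\Delta^\epsilon$ is the (rescaled) generator of the symmetric part of the dynamics — which converges to $\partial_x^2$ — and $M^\epsilon(x)$ is a martingale assembled from the compensated Poisson jumps on the two bonds adjacent to $x$; the residual deterministic drift is annihilated by the choice of $\nu_\epsilon$, which after rescaling corresponds exactly to $b_\epsilon=\tfrac12\epsilon^{-1}+\tfrac1{24}$. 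The value of the transform is that it linearizes the dynamics, so that one never has to make direct sense of $(\partial_x h)^2$.

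Next I would prove tightness of $\{Z^\epsilon\}$ in $D([0,T],C(\Bbb R))$ with a polynomial spatial weight. The hypothesis that $\epsilon^{1/2}h_0^\epsilon(\epsilon^{-1}\cdot)\to H_0$ in $\mathscr C_\delta^\alpha$ gives both convergence of $Z^\epsilon_0$ to $Z_0=e^{\lambda H_0}$ and the sublinear-at-infinity control needed to work in such a weighted space. From the discrete Duhamel formula $Z^\epsilon_t=p^\epsilon_t*Z^\epsilon_0+\int_0^t p^\epsilon_{t-s}*(Z^\epsilon_{s^-}\,dM^\epsilon_s)$, classical Gaussian bounds on the discrete heat kernel $p^\epsilon$, and a Gronwall argument, one obtains uniform moment bounds $\sup_\epsilon\Bbb E[|Z^\epsilon_t(x)|^{2k}]\le C_k e^{c_k|x|}$ together with Kolmogorov-type moduli of continuity in $x$ and $t$. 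This gives tightness, and along subsequences convergence in law to a continuous limit $Z$.

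The heart of the argument is to identify any such $Z$ as the Hopf-Cole solution of \eqref{KPZ} in the sense of Definition \ref{hopf}. Testing against a smooth $\phi$, the process $N^\epsilon_\phi(t):=(Z^\epsilon_t,\phi)-(Z^\epsilon_0,\phi)-\int_0^t(Z^\epsilon_s,\Delta^\epsilon\phi)\,ds$ is a martingale, and its predictable quadratic variation equals, up to vanishing errors, a constant times $\epsilon\int_0^t\sum_x\phi(\epsilon x)^2 Z^\epsilon_s(x)^2\,c(\eta_s,x)\,ds$, where $c(\eta,x)$ is a local function of the occupation variables near $x$ whose expectation under the Bernoulli-$\tfrac12$ measure is some constant $\kappa$. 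One therefore needs a \emph{replacement} estimate (a one-block / Boltzmann-Gibbs-type bound) showing that, weighted against the smooth test function and the factor $Z^\epsilon_s(x)^2$, one may replace $c(\eta_s,x)$ by $\kappa$ with an $L^2$ error vanishing as $\epsilon\to 0$. \emph{This is the main obstacle.} In \cite{BG97} it is handled by an explicit second-moment computation bounding the contribution of the centered fluctuation $c(\eta_s,x)-\kappa$ by a two-particle collision term, using that two coupled nearest-neighbor random walks spend only $O(\epsilon^{-1})$ of the rescaled time horizon at the same site — precisely the factor killed by the $\epsilon$ prefactor. Granting this, $\langle N^\epsilon_\phi\rangle_t\to\tilde\kappa\int_0^t(Z_s^2,\phi^2)\,ds$, and then a standard martingale-representation argument — using the fourth-moment bounds and Burkholder-Davis-Gundy to pass the bracket to the limit — shows that $Z$ satisfies the It\^o-Walsh mild equation of Definition \ref{hopf} driven by a space-time white noise $\xi$, for the corresponding value of $\lambda$.

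Finally, since that mild equation with datum $Z_0=e^{\lambda H_0}$ is well-posed by the classical theory of \cite{Wal86}, the subsequential limit is unique, hence the whole sequence $Z^\epsilon$ converges in law; and since $Z>0$ almost surely (Mueller's strict positivity theorem for the stochastic heat equation), the map $z\mapsto\lambda^{-1}\log z$ is continuous on the range of $Z$, so $h^\epsilon=\lambda^{-1}\log Z^\epsilon$ converges in $D([0,T],C(\Bbb R))$ to the Hopf-Cole solution $h$ with initial data $H_0$, the uniform moment bounds upgrading finite-dimensional to functional convergence. It remains only to check that the deterministic constants fit together — in particular that $b_\epsilon=\tfrac12\epsilon^{-1}+\tfrac1{24}$ is exactly what cancels the compensator drift and the discretization error $\Delta^\epsilon-\partial_x^2$ — which is routine but delicate bookkeeping.
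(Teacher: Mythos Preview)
The paper does not give a proof of this statement at all: Theorem~\ref{bg97} is simply quoted from \cite{BG97} (with a remark explaining why convergence of the initial data in $\mathscr C_\delta^\alpha$ implies the ``near stationarity'' hypotheses actually used there), and is used as a black box throughout. So there is nothing to compare your argument against in the paper itself.

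That said, your outline is a faithful sketch of the Bertini--Giacomin argument, and the pieces you single out --- the G\"artner/microscopic Hopf--Cole transform producing a closed discrete stochastic heat equation, tightness via Duhamel and kernel bounds, identification of the limiting bracket via a second-moment/key-estimate computation, martingale representation in the style of \cite{KS88,Wal86}, and Mueller positivity to pass back to $h$ --- are exactly the ones the present paper leans on when it later revisits the machinery of \cite{BG97} inside the proof of Lemma~\ref{lem2}. One small quibble: in the discrete SHE it is cleaner (and closer to what \cite{BG97} actually writes, cf.\ their equation (3.13) and \eqref{discshe} here) to put the martingale increment as $dM^\epsilon_t(x)$ rather than $Z^\epsilon_{t^-}(x)\,dM^\epsilon_t(x)$; the point is that the quadratic variation is \emph{comparable} to $\int Z^2$, not that the noise is literally multiplicative at the discrete level. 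Also, calling the key estimate a ``one-block/Boltzmann--Gibbs'' bound is suggestive but a bit anachronistic --- \cite{BG97} does a direct second-moment computation rather than invoking hydrodynamic-limit replacement machinery.
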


Let us remark that convergence in $\mathscr C_\delta^\alpha$ is slightly different than the actual assumption on the initial data given in \cite{BG97}. Specifically, in Definition 2.2 of \cite{BG97}, the authors considered \textit{possibly random} initial data which are ``near stationarity" in the sense that if $Z_0^\epsilon:=\exp(h_0^\epsilon)$ then one has the moment bounds $\|Z_0^\epsilon(x)\|_p\leq Ce^{ax}$ and $\|Z_0^\epsilon(x)-Z_0^\epsilon(y)\|_p\leq C|x-y|^{1/2} e^{a(|x|+|y|)},$ uniformly in $x,y,\epsilon$. Here $p$ is some exponent larger than 10 and $\|A\|_p:=\Bbb E[|A|^p]^{1/p}.$ The substance of their proof is unchanged when the exponent $1/2$ in the second bound is changed to arbitrary $\alpha\in(1/p,1/2)$. For technical reasons we will find it convenient to work with deterministic initial data which converge in $\mathscr C_\delta^\alpha$, which clearly satisfy these bounds. In fact even functions of \textit{linear} growth would satisfy these bounds, so our assumption of sublinear growth and deterministic data is actually substantially \textit{more} restrictive. We will randomize the assumptions on our initial data in Subsection 3.4.
%Regarding this distinction in our assumption from theirs, Definition 2.2 of \cite{BG97} is slightly more general than convergence in $\mathcal C_\delta^\alpha$: they allow the initial data of the height function to have \textit{linear} growth at infinity, whereas we only consider sublinear growth. Also, their assumption allowed the limiting height function $H_0$ to take the value $-\infty$ as long as $e^{H_0}$ is continuous, but we do not aim for the greatest generality here. We will randomize the assumptions on our initial data in Subsection 3.4, and although the assumptions are not as general as those of \cite{BG97} they will still allow us to consider fairly interesting initial data such as product Bernoulli. 

\subsection{Main result: joint convergence for ASEP}

Our goal is to extend Theorem \ref{bg97} so that one may consider the limiting height field started from any finite collection of (sequences of) initial data $(h_0^{i,\epsilon})_{i=1}^k$ whose dynamics are jointly run according to the basic coupling. The goal is to obtain convergence in $D([0,T],C(\Bbb R)^k).$ We are going to do this in a manner which is essentially orthogonal to proof of the original convergence result of \cite{BG97}, by exploiting Theorem \ref{mon1} and \eqref{M} and \eqref{A}.

\begin{lem}\label{lem1} Suppose that we have two deterministic sequences of initial data $h_0^{1,\epsilon}$ and $h_0^{2,\epsilon}$ which both converge in $\mathscr C_\delta^\alpha$ to the same initial data. For any joint limit point $(H^1,H^2)$ of the basically coupled space-time processes, we have $H^1(t,x)=H^2(t,x)$ for all $t,x$ a.s.
\end{lem}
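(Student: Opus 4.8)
The plan is to use Theorem \ref{bg97} to get tightness of the pair $(h^{1,\epsilon},h^{2,\epsilon})$ in $D([0,T],C(\Bbb R)^2)$, pass to a subsequence converging in law to a joint limit point $(H^1,H^2)$, and then show the two components coincide. The key observation is that, since $h_0^{1,\epsilon}$ and $h_0^{2,\epsilon}$ converge to the \emph{same} limiting initial data $H_0$ in $\mathscr C_\delta^\alpha$, each marginal $H^i$ is by Theorem \ref{bg97} a Hopf-Cole solution of \eqref{KPZ} with $\lambda=1$ and the same initial data $H_0$, driven by \emph{some} space-time white noise $\xi^i$. The subtlety, flagged in the introduction, is that a priori $\xi^1 \ne \xi^2$ because different suppressed-jump patterns mean the two height functions ``see'' different subsets of the Poisson clocks; so the identification of $\xi^1$ and $\xi^2$ is exactly where Theorem \ref{mon1} (or really Corollary \ref{cor}, to get the martingale/filtration structure directly from the construction) enters. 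To invoke it we must verify its two hypotheses for the pair $(\xi^1,\xi^2)$ arising as a limit point.

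The second hypothesis of Theorem \ref{mon1} — that $h^2(t,\cdot)-h^1(t,\cdot)$ has finite $p$-variation in $x$ for some $p<2$ — is where \eqref{M} and \eqref{A} do the work. First I would sandwich: set $g_0^\epsilon := \min(h_0^{1,\epsilon},h_0^{2,\epsilon})$ and $G_0^\epsilon := \max(h_0^{1,\epsilon},h_0^{2,\epsilon})$, which are viable height functions converging in $\mathscr C_\delta^\alpha$ to the same $H_0$ (the lattice of viable height functions is closed under $\max,\min$, and uniform limits of two sequences with a common limit force the min and max to that limit). Running these under the \emph{same} basic coupling, monotonicity \eqref{M} gives $g^\epsilon(t,x)\le h^{i,\epsilon}(t,x)\le G^\epsilon(t,x)$ for $i=1,2$, hence $|h^{2,\epsilon}(t,x)-h^{1,\epsilon}(t,x)|\le G^\epsilon(t,x)-g^\epsilon(t,x)$. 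Both $g^\epsilon$ and $G^\epsilon$ converge in law to the Hopf-Cole solution with initial data $H_0$ by Theorem \ref{bg97}, so in any joint subsequential limit the sandwiching functions $\tilde G, \tilde g$ each solve \eqref{KPZ} with initial data $H_0$ and in particular $\tilde G(t,\cdot)-\tilde g(t,\cdot)$ is (for fixed $t$) a difference of two functions each of which is spatially Hölder of exponent $>1/2$ after subtracting the common singular part $X$ (using the $h^i = X^i + v^i$ decomposition from the proof of Theorem \ref{mon1}, where here $X^1=X^2=:X$ would follow \emph{a posteriori} — so more carefully one argues that $H^2-H^1$ is squeezed by $\tilde G - \tilde g$, and the latter has zero quadratic variation because both sandwiching solutions have the same $X$-component). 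Either way one concludes $H^2(t,\cdot)-H^1(t,\cdot)$ has finite $p$-variation ($p<2$), indeed zero quadratic variation, in $x$.

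For the first hypothesis I would keep track, along the prelimit, of the martingale structure: realize each $\xi^{i,\epsilon}$ (the "clocks actually used by system $i$") as a martingale in the joint filtration generated by all the Poisson clocks up to time $s$, so that disjoint time-space supports give vanishing covariance at the prelimit level, and this property is preserved under the weak limit since it is a statement about a finite-dimensional marginal (covariances of integrals against indicator-type test functions with disjoint support). Then Corollary \ref{cor} applies and yields $\xi^1=\xi^2$; since both $H^1,H^2$ are then Hopf-Cole solutions of the \emph{same} SPDE with the \emph{same} initial data $H_0$ and the \emph{same} noise, uniqueness of the Hopf-Cole solution (well-posedness of the multiplicative-noise equation, Definition \ref{hopf}) forces $H^1=H^2$ a.s. Since every subsequential limit point has this property, the conclusion holds for the original sequence. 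The main obstacle is the bookkeeping in this last paragraph: making precise what $\xi^{i,\epsilon}$ \emph{is} as a process indexed by test functions, checking it converges jointly with $h^{i,\epsilon}$ (this is essentially contained in the Bertini-Giacomin martingale-problem machinery, where the driving martingale of the discrete Hopf-Cole field is shown to converge to Itô-Walsh integration against white noise), and verifying that the disjoint-support orthogonality and the joint-filtration martingale property survive the limit. The geometric/analytic part via \eqref{M}, \eqref{A}, and the $p$-variation bound is comparatively clean once the sandwiching is set up.
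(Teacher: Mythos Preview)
Your sandwich construction with $g_0^\epsilon=\min(h_0^{1,\epsilon},h_0^{2,\epsilon})$ and $G_0^\epsilon=\max(h_0^{1,\epsilon},h_0^{2,\epsilon})$ is exactly right, and it is the key idea. But the argument you build on top of it has a genuine gap: you try to feed the pair $(H^1,H^2)$ into Theorem~\ref{mon1}, and to verify its $p$-variation hypothesis you claim that $\tilde G-\tilde g$ has zero quadratic variation ``because both sandwiching solutions have the same $X$-component.'' That is circular, as you yourself note: $\tilde G$ and $\tilde g$ are KPZ solutions with the same initial data but \emph{a priori} different noises, so their singular parts $X^{\tilde G},X^{\tilde g}$ need not coincide. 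There is also no attractivity available here: $G_0^\epsilon-g_0^\epsilon$ is nonnegative but not monotone in general, so \eqref{A} does not apply to the pair $(g^\epsilon,G^\epsilon)$. Finally, even if $\tilde G-\tilde g$ had small sup norm, that would not by itself control the $p$-variation of a function squeezed between $0$ and $\tilde G-\tilde g$; you would need $\tilde G=\tilde g$ identically.

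The paper's proof closes this gap with a one-line observation that bypasses Theorem~\ref{mon1} entirely. Once you have the sandwich $\tilde g\le H^i\le\tilde G$ from \eqref{M}, note that $\tilde G$ and $\tilde g$ are each \emph{marginally} Hopf-Cole solutions of \eqref{KPZ} with the same initial data $H_0$. Uniqueness \emph{in law} of the KPZ equation therefore gives $\Bbb E[\tilde G(t,x)]=\Bbb E[\tilde g(t,x)]$ for every $t,x$. Since $\tilde g(t,x)\le\tilde G(t,x)$ pointwise a.s., equal expectations force $\tilde G=\tilde g$ a.s., and then the sandwich gives $H^1=H^2$ directly. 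No noise identification, no $p$-variation, no Corollary~\ref{cor}. In the paper's architecture, Theorem~\ref{mon1} is reserved for Lemma~\ref{lem2}, where the initial data are \emph{ordered} in the sense of \eqref{A} and the monotonicity of the difference genuinely supplies the $p$-variation hypothesis; Lemma~\ref{lem1} is meant to be the soft step.
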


\begin{proof}
One readily checks that if two height functions are viable, then so are their maximum and minimum. We thus define $h_0^{3,\epsilon}: = \max\{h_0^{1,\epsilon},h_0^{2,\epsilon}\}$ and $h_0^{4,\epsilon}:=\min\{h_0^{1,\epsilon},h_0^{2,\epsilon}\}$. It is clear that $h_0^{3,\epsilon}$ and $h_0^{4,\epsilon}$ both converge in $\mathscr C_\delta^\alpha$ to the same initial data as $h_0^{1,\epsilon}$ and $h_0^{2,\epsilon}$. By \eqref{M} is also clear that $h^{4,\epsilon}(t,x) \leq h^{i,\epsilon}(t,x) \leq h^{3,\epsilon}(t,x)$ for $i=1,2$ and all $t,x,\epsilon$.
\\
\\
Letting $(H^1,H^2,H^3,H^4)$ denote a joint limit point of all four processes, we see that it must satisfy $H^4 \leq H^i \leq H^3$ for $i=1,2$. It is also true that $H^1_0=H^2_0=H^3_0=H^4_0$ because $h_0^{1,\epsilon}$ and $h_0^{2,\epsilon}$ converge in $\mathscr C_\delta^\alpha$ to the same function and hence so do their max and min. The KPZ equation satisfies uniqueness in law, thus two solutions started from the same initial data have the same expectation, i.e., $\Bbb E[H^4(t,x)] = \Bbb E[H^3(t,x)]$.
\\
\\
Since $H^4(t,x) \leq H^3(t,x)$ and $\Bbb E[H^4(t,x)] = \Bbb E[H^3(t,x)]$, we conclude that $H^4=H^3$ a.s. Since $H^1,H^2$ are nested in between $H^3$ and $H^4$, we conclude that $H^1=H^2$.
\end{proof}

The next lemma will be the key behind all subsequent results. It proves the main result in the very special case that the two initial data are ordered as in \eqref{A}, and it will be proved using the results of Section 2.% Probably it could also be derived using hydrodynamic observations in each specific model, but the advantage of a continuum result like Theorem \ref{mon1} is that serves as a sort of black box: we do not need to bother with annoying model-specific considerations and notations as long as they satisfy \eqref{A}.

\begin{lem}\label{lem2}
If $h_0^{1,\epsilon}$ and $h_0^{2,\epsilon}$ which are both deterministic, their difference is nondecreasing for every $\epsilon$, and they converge weakly to initial data $H_0^1$ and $H_0^2$, then $h^{1,\epsilon}$ and $h^{2,\epsilon}$ converge jointly to the solution of the KPZ equation driven by the same noise.
\end{lem}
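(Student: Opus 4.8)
The plan is to show that every joint subsequential limit of $(h^{1,\epsilon},h^{2,\epsilon})$ is a pair of Hopf--Cole solutions of \eqref{KPZ} driven by a \emph{common} noise, and then invoke well-posedness to promote this to convergence of the full sequence; the engine is Corollary \ref{cor}, fed by the attractivity property \eqref{A}. By Theorem \ref{bg97} each of $h^{1,\epsilon}$ and $h^{2,\epsilon}$ converges in law in $D([0,T],C(\mathbb R))$, so the pair is tight in $D([0,T],C(\mathbb R)^2)$; fix a subsequence along which $(h^{1,\epsilon},h^{2,\epsilon})\to(H^1,H^2)$ and work on the probability space carrying this limit. Applying Theorem \ref{bg97} to each coordinate, $H^i$ is the Hopf--Cole solution of \eqref{KPZ} with $\lambda=1$ and initial datum $H_0^i$, so it comes equipped with a driving space-time white noise $\xi^i$ (equivalently a cylindrical Wiener process $W^i$), and $\xi^1,\xi^2$ are coupled on this one space. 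It then remains to verify the three hypotheses of Corollary \ref{cor}.

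\textbf{The martingale and decorrelation hypotheses.} For these I would carry the Bertini--Giacomin analysis through for the coupled pair. Writing $Z^{i,\epsilon}$ for the Gärtner (Hopf--Cole) transform of $h^{i,\epsilon}$, one has a discrete stochastic heat equation $dZ^{i,\epsilon}_t = (\text{discrete-Laplacian term})\,dt + dM^{i,\epsilon}_t$ in which $M^{i,\epsilon}$ is a martingale with respect to the common filtration generated by the Poisson clocks (and, for $J>1$, the uniform variables), since the basic coupling drives both systems off these clocks. Tracking $(Z^{1,\epsilon},Z^{2,\epsilon},M^{1,\epsilon},M^{2,\epsilon})$ jointly and passing to the limit along the chosen subsequence, the limit of $M^{i,\epsilon}$ is the stochastic-integral part of the continuum SHE for $Z^i=e^{H^i}$, from which $W^i$ is recovered; because both $M^{1,\epsilon}$ and $M^{2,\epsilon}$ are martingales in a single filtration, so are $W^1$ and $W^2$, which is hypothesis (2) of Corollary \ref{cor}. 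For hypothesis (1), use that a clock ring on the bond $(x,x\pm1)$ affects $Z^{i,\epsilon}$ only at the two sites $x,x\pm1$ and that clocks on disjoint bonds are independent: the discrete predictable cross-covariation $\langle M^{1,\epsilon}(x),M^{2,\epsilon}(y)\rangle$ vanishes once $x,y$ are not neighbours, so the limiting cross-covariation of the two martingale parts is supported on the spatial diagonal, whence $\mathbb E[W^1_T(f)W^2_T(g)]=0$ whenever $f,g\in L^2(\mathbb R)$ have disjoint supports.

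\textbf{Attractivity and conclusion.} Since $h_0^{2,\epsilon}-h_0^{1,\epsilon}$ is nondecreasing for every $\epsilon$, attractivity \eqref{A} gives that $h_t^{2,\epsilon}(\cdot)-h_t^{1,\epsilon}(\cdot)$ is nondecreasing in $x$ for every $t$ and $\epsilon$; this survives the affine rescaling \eqref{resc} (the prefactor $a_\epsilon>0$ and the time-drift cancels in the difference) and the passage to the limit, so $H^2(t,\cdot)-H^1(t,\cdot)$ is nondecreasing, hence of finite variation, in particular of finite $p$-variation with $p=1<2$ on every compact interval. That is hypothesis (3) of Corollary \ref{cor}, so the corollary yields $W^1=W^2$, i.e.\ $\xi^1=\xi^2$. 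Thus $(H^1,H^2)$ is a pair of Hopf--Cole solutions of \eqref{KPZ} driven by the same noise with initial data $(H_0^1,H_0^2)$; by uniqueness in law of the Hopf--Cole solution given the noise and the initial datum, the joint law of $(H^1,H^2)$ is independent of the subsequence, so the full sequence $(h^{1,\epsilon},h^{2,\epsilon})$ converges in $D([0,T],C(\mathbb R)^2)$ to the pair of KPZ solutions driven by the same noise.

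\textbf{Main obstacle.} The delicate point is the second step: pushing the Bertini--Giacomin martingale bookkeeping through \emph{for the basically coupled pair}, so that the two limiting noises genuinely live on one probability space, are each cylindrical Wiener processes for one common filtration, and decorrelate across disjoint spatial supports. Tightness, the attractivity input, and the final uniqueness argument are comparatively soft.
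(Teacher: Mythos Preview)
Your proposal is correct and follows essentially the same route as the paper: verify hypothesis (3) of Corollary \ref{cor} via attractivity \eqref{A}, and verify hypotheses (1)--(2) by carrying the Bertini--Giacomin martingale identity \eqref{discshe} through for the basically coupled pair, using that the discrete martingales satisfy \eqref{qv} in the common filtration so that the limiting driving noises inherit the joint-martingale and disjoint-support-decorrelation properties. One small sharpening: in fact $\langle M^{i,\epsilon}(x),M^{j,\epsilon}(y)\rangle_t=0$ for \emph{all} $x\neq y$ (not merely non-neighbours), since $Z^{i,\epsilon}_t(x)$ jumps only when the clock on the single bond $(x,x+1)$ rings---this is exactly \eqref{qv} and makes the passage to the continuum orthogonality slightly cleaner.
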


\begin{proof}
Note by \eqref{A} that the dynamic of the particle system preserves the condition that the difference of height functions is nondecreasing. Thus if $h_0^{2,\epsilon}-h_0^{1,\epsilon}$ is nondecreasing, then we know that $h^{2,\epsilon}(t,\cdot)-h^{1,\epsilon}(t,\cdot)$ is a.s. nondecreasing for every $t$. In particular, if $(H^1,H^2)$ is a joint limit point of $(h^{1,\epsilon},h^{2,\epsilon})$, then $H^2(t,\cdot) - H^1(t,\cdot)$ is nondecreasing (and in particular, of finite variation) for every $t$. Thus condition \textit{(3)} of Corollary \ref{cor} is satisfied. 
\\
\\
Now we just need to make sure that the conditions \textit{(1)} and \textit{(2)} of Corollary \ref{cor} is satisfied. For this we need to look into the precise details of how exactly Bertini and Giacomin proved their result. They first noted that of one defines $$Z^{i,\epsilon}_t(x):= \exp\big(\epsilon^{1/2}h^{i,\epsilon}(t,x) - (\frac12\epsilon^{-1} -\frac1{24})t\big),$$ then the $Z^{i,\epsilon}$ solve a discrete parabolic martingale-driven SPDE: 
\begin{equation}\label{discshe}
dZ_t(x) = (1+2\epsilon^{1/2})^{1/2}\Delta Z^{i,\epsilon}(x)dt+dM^{i,\epsilon}_t(x),
\end{equation}
where $\Delta f(x):= \frac12( f(x+1)+f(x-1)-2f(x))$, and $M^{i,\epsilon}(x)$ are jump martingales with the property that 
\begin{equation}\label{qv}\langle M^{i,\epsilon}(x),M^{j,\epsilon}(y)\rangle_t =0\;\;\;\;\;\;\;\;\text{if}\;\;\;\;\;\;\;\;x \neq y
\end{equation}
for all $i,j=1,2$. See equation (3.13) in \cite{BG97}.
\\
\\
Bertini and Giacomin then proceed to show that, for smooth functions $\phi \in C_c^{\infty}(\Bbb R)$, if one defines the martingale $M^{i,\epsilon}_t(\phi):= \epsilon\sum_{x\in \Bbb Z} \phi(\epsilon x) M_t^{i,\epsilon}(x)$, then any limit point (joint over all $\phi\in C_c^\infty$ and all $i=1,2$) of $M^{i,\epsilon}_t(\phi)$ as $\epsilon \to 0$ is a continuous martingale $M_t^i(\phi)$. In the language of \cite{Wal86}, the collection of martingales $M_t^i(\phi)$, as $\phi$ ranges over all smooth functions, form an \textit{orthogonal martingale measure}, in the sense that $\langle M_t^i(\phi),M_t^j(\psi)\rangle = 0$ whenever $\phi,\psi$ have disjoint supports and $i,j=1,2$ (this is clear because the corresponding statement is true even in the prelimit, by the property that $\langle M^{i,\epsilon}(x),M^{j,\epsilon}(y)\rangle_t =0$ if $x \neq y$ and $i,j=1,2$).
\\
\\
Now consider any joint limit point $(H^1,H^2)$ of $(h^{1,\epsilon},h^{2,\epsilon})$. Let $Z^1:=e^{H^1}$ and $Z^2:=e^{H^2}$.  Bertini and Giacomin show using \eqref{discshe} that the $Z^i$ must satisfy the relation $$(Z^i_t,\phi)_{L^2(\Bbb R)} - \int_0^t (Z^i_s,\phi'')_{L^2(\Bbb R)}ds = M^i_t(\phi).$$Using the language of \cite{KS88} and \cite{Wal86}, Bertini and Giacomin then use this to show that for each $i=1,2$ one can construct the driving noise $W^i$ of $Z^i$ as an Ito-Walsh stochastic integral against $M^i$. By the properties of Ito-Walsh stochastic integrals, it then automatically follows that $\langle W^i(\phi),W^j(\psi)\rangle=0$ for $i,j=1,2$ and $\phi,\psi$ of disjoint supports. Indeed, thus is is because the corresponding property is true for $M^i$ and because $W^i$ is a stochastic integral against $M^i$. Thus the conditions of Corollary \ref{cor} are satisfied and so $H^1=H^2$.
\end{proof}

\begin{rk} \label{>2}Note that the results of the previous two lemmas generalize fairly straightforwardly to the case where we have $k>2$ distinct initial data converging in $\mathscr C_\delta^\alpha(\Bbb R)^k.$ Indeed, if $(H^1,...,H^k)$ is a joint limit point of the height functions and if any subpair  $(H^i,H^j)$ is driven by the same noise, then they are all driven by the same noise. Here we are implicitly using the fact that the driving noise can be deterministically recovered from any solution-in-law of the KPZ equation, which is a nontrivial fact that can be deduced by combining the orthomartingale theory of \cite{Wal86} with the Hopf-Cole transform and a positivity result of \cite{Mue91}. Alternatively this fact can also be deduced more directly from the pathwise theories developed by \cite{Hai13, PR19}. Alternatively, even without using any of those aforementioned results, one can recognize that our proof strategy in both lemmas was done in such a way that the proofs generalize directly to several initial data. Indeed, in the proof of Lemma \ref{lem1} one can consider the max and min of $k$ distinct initial profiles, and these are still viable and convergent to the same limit in $\mathscr C_\delta^\alpha$. In the proof of Lemma \ref{lem2}, it is clear that one can keep track of both the noises as well as the height functions in the limit. Likewise, subsequent results such as Proposition \ref{prop}, Theorem \ref{mr}, and Theorem \ref{mr2} also generalize to more than two initial profiles, either by using the nontrivial fact mentioned earlier or by working through the logic in the proofs directly.
\end{rk}

Now that we have proved the main result in the special case when the two initial data are ordered deterministically at the level of the particle system, the next step will be to prove the claim for two initial data which are smooth or at least differentiable in some strong enough sense. Then we can dominate both of the initial data by some third initial data whose derivative is larger than both of the individual initial data, and then apply Lemma \ref{lem2} to conclude that the noises for all three are the same. This will be done in Proposition \ref{prop} below, but first we need a lemma.
\\
\\
We henceforth define $\mathcal V_\epsilon$ to be all functions of the form $\epsilon^{1/2}f(\epsilon^{-1}x)$ where $f$ is a viable height function as defined in Subsection 3.1. We also let $\mathscr C^1_\delta$ to be the set of all continuously differentiable functions on $\Bbb R$ such that 
$\sup_{x\in\Bbb R} (1+|x|)^{-\delta}(|f'(x)|+\int_0^x |f'(u)|du)<\infty.$ It is clear that $\mathscr C^1_\delta$ is a Banach space if we define its norm to be $|f(0)|$ plus that quantity\footnote{This is \textbf{not} a standard definition of $\mathscr C^1_\delta$, we have only defined it in this way for convenience of the arguments given later. Strictly speaking, we should really call this space $BV_\delta$ or something similar due to the defining condition that the variation is bounded by $C|x|^\delta$. \iffalse In general Hölder spaces do not generalize unambiguously to integer exponents, as they could be used to denote e.g. Lipchitz functions, continuously differentiable functions, Zygmund class functions, functions with derivatives in BMO, and other types of function spaces.\fi} and that it embeds compactly into $\mathscr C^\alpha_{\delta'}$ whenever $\alpha<1$ and $\delta'>\delta.$ More generally we will often use the fact that if $0<\alpha_1<\alpha_2\leq 1$ and $1>\delta_1>\delta_2>0$ then $\mathscr C^{\alpha_2}_{\delta_2}$ embeds compactly into $\mathscr C^{\alpha_1}_{\delta_1}.$ This follows from Arzela-Ascoli together with the interpolation properties of Hölder seminorms, see e.g. Lemma 24.14 of \cite{Dri} for the elementary proof, or \cite{Mey90} for a more general theory on Hölder spaces and their embeddings via Littlewood-Paley theory (Section 2 of \cite{PR19} also has a nice discussion of the latter). We now give an approximation algorithm $\mathcal A^\epsilon$ for smooth functions by rescaled viable functions, and moreover the algorithm preserves the property that the difference of two functions is nondecreasing.

\begin{lem}\label{two}
Fix $\alpha\in(0,1/4)$ and $\delta<\delta' \in (0,1)$. Then there exists a family of maps $\mathcal A^\epsilon : \mathscr C^1_\delta \to \mathscr C^\alpha_\delta \cap \mathcal V_\epsilon$ with the following properties:
\begin{itemize}
    \item For all $f\in \mathscr C^1_\delta$, we have that $\mathcal A^\epsilon(f) \to f$ in $\mathscr C^{\alpha}_{\delta'}$ as $\epsilon \to 0$.
    
    \item $\mathcal A^\epsilon(g)-\mathcal A^\epsilon(f)$ is nondecreasing whenever $g-f$ is nondecreasing.
\end{itemize}
%Suppose $f,g:\Bbb R \to \Bbb R$ are smooth functions in $\mathscr C^\alpha_\delta$ whose derivatives also lie in $\mathscr C_\delta^\alpha$, where $\alpha<1/2$. Then there exists a viable family $f^\epsilon,g^\epsilon$ viable of height functions such that $\epsilon^{1/2}f(\epsilon^{-1}x), \epsilon^{1/2}g(\epsilon^{-1}x)$ converge to $f,g$ respectively with respect to the norm of $\mathscr C^\alpha_{\delta'}$ for any $\delta'>\delta$. Furthermore, if $g-f$ is nondecreasing, then $f^\epsilon,g^\epsilon$ may be chosen so that $g^\epsilon-f^\epsilon$ is nondecreasing for all $\epsilon$.
\end{lem}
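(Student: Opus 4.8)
The plan is to build $\mathcal A^\epsilon(f)$ in two stages: first produce a viable integer-valued height function on $\Bbb Z$ that tracks the graph of $\epsilon^{-1/2}f(\epsilon\,\cdot)$ up to an error of order $1$ (so that after the rescaling $\epsilon^{1/2}f(\epsilon^{-1}x)$ the error is $O(\epsilon^{1/2})$), and then verify the two required properties. Concretely, for $f\in\mathscr C^1_\delta$ consider the rescaled profile $F_\epsilon(y):=\epsilon^{-1/2}f(\epsilon y)$ for $y\in\Bbb Z$; its nearest-neighbor slopes are $F_\epsilon(y+1)-F_\epsilon(y)=\epsilon^{-1/2}\big(f(\epsilon(y+1))-f(\epsilon y)\big)\approx \epsilon^{1/2}f'(\epsilon y)$, which tends to $0$, so it is \emph{not} viable (viable slopes for ASEP must lie in $\{-1,+1\}$). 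The fix is the standard ``staircase'' construction: define $g^\epsilon(y):=2\big\lceil \tfrac12 F_\epsilon(y)\big\rceil$ rounded to the nearest integer of the correct parity, or more robustly, let $g^\epsilon$ be the viable height function whose increments are chosen greedily so that $|g^\epsilon(y)-F_\epsilon(y)|\le 1$ for all $y$ — i.e.\ at each step set the slope to $+1$ if $g^\epsilon(y)<F_\epsilon(y+1)$ and $-1$ otherwise (for ASEP, $J=1$; for general $J$ one rounds to the lattice of allowed increments). One checks by induction that this keeps $|g^\epsilon(y)-F_\epsilon(y)|\le J$ uniformly, using that $|F_\epsilon(y+1)-F_\epsilon(y)|\le 1$ for $\epsilon$ small (because $f'$ is bounded on compacts and has at most $|x|^\delta$ growth). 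Finally set $\mathcal A^\epsilon(f)(x):=\epsilon^{1/2}g^\epsilon(\epsilon^{-1}x)$, linearly interpolated, which by construction lies in $\mathscr C^\alpha_\delta\cap\mathcal V_\epsilon$.

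For the first bullet (convergence in $\mathscr C^\alpha_{\delta'}$), note that $\|\mathcal A^\epsilon(f)-\epsilon^{1/2}F_\epsilon(\epsilon^{-1}\cdot)\|_\infty\le J\epsilon^{1/2}\to 0$ pointwise, and $\epsilon^{1/2}F_\epsilon(\epsilon^{-1}x)=f(x)$ exactly on $\epsilon\Bbb Z$ with linear interpolation in between, so the sup-distance between $\mathcal A^\epsilon(f)$ and $f$ on any compact goes to $0$ at rate $O(\epsilon^{1/2})+O(\epsilon\,\|f'\|_{\infty,\mathrm{loc}})$. To upgrade this sup-convergence to convergence in $\mathscr C^\alpha_{\delta'}$, I would invoke the compact embedding $\mathscr C^{\alpha}_{\delta}\hookrightarrow\hookrightarrow\mathscr C^{\alpha'}_{\delta'}$ for $\delta<\delta'$ recalled just before the lemma statement (via Arzel\`a--Ascoli plus Hölder interpolation, Lemma 24.14 of \cite{Dri}): one first shows the family $\{\mathcal A^\epsilon(f)\}_\epsilon$ is bounded in $\mathscr C^{\alpha''}_\delta$ for some $\alpha''\in(\alpha,1/4)$ — here is where the hypothesis $\alpha<1/4$ gives headroom — using that $g^\epsilon$ has $\pm1$ increments so $\mathcal A^\epsilon(f)$ is $\epsilon^{1/2}$-Lipschitz on scale $\epsilon$ and, on scales $\ge1$, inherits the BV-type bound $\int_0^x|f'|\le C|x|^\delta$ from $f\in\mathscr C^1_\delta$ (controlling the Hölder seminorm with weight $(1+|x|)^\delta$). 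Boundedness in $\mathscr C^{\alpha''}_\delta$ plus pointwise convergence to $f$ then forces convergence in the weaker norm $\mathscr C^{\alpha}_{\delta'}$.

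For the second bullet (monotone differences preserved), suppose $g-f$ is nondecreasing. The key is to make the greedy construction \emph{monotone in the target profile}: if $F_\epsilon\le G_\epsilon$ and $G_\epsilon-F_\epsilon$ is nondecreasing, I want $g^\epsilon_{G}-g^\epsilon_{F}$ nondecreasing, where $g^\epsilon_F,g^\epsilon_G$ are the rounded staircases. This is exactly the discrete analogue of \eqref{A}: rounding each profile to the nearest viable lattice path in a \emph{consistent, order-preserving} way. The cleanest route is to define the rounding via a common ``floor'' operation — e.g.\ $g^\epsilon_F(y):=$ the largest viable height function $\le F_\epsilon+c$ anchored appropriately, or via the explicit formula $g^\epsilon_F(y)=\sum_{j<y}\mathrm{sgn}$-type increments determined by $\lfloor F_\epsilon\rfloor$ — and then observe that $F\mapsto g^\epsilon_F$ commutes with adding a nondecreasing perturbation, so $g^\epsilon_G-g^\epsilon_F$ is itself a difference of increments coming from the nondecreasing function $G_\epsilon-F_\epsilon$, hence nondecreasing. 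I expect \textbf{this step — rounding to a viable path while exactly preserving the ``nondecreasing difference'' relation — to be the main obstacle}, since a naive coordinatewise rounding can destroy monotonicity of the difference by $O(1)$ oscillations; the resolution is to pick the rounding scheme so that the increment of $g^\epsilon_F$ at bond $y$ is a \emph{monotone function of} $F_\epsilon(y+1)-g^\epsilon_F(y)$ alone, mirroring the single-excitation argument that proves \eqref{A}, and then to carry the induction $g^\epsilon_G(y)-g^\epsilon_F(y)$ nondecreasing through one bond at a time. Once both bullets are established the lemma follows.
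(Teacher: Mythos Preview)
Your convergence argument (first bullet) is along the right lines --- pointwise closeness plus a uniform Hölder bound and the compact embedding $\mathscr C^{\alpha''}_\delta \hookrightarrow \mathscr C^{\alpha}_{\delta'}$ --- and matches the paper's strategy, though you are vague about the intermediate-scale estimate $\epsilon \le |x-y| \le 1$.

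The genuine gap is in the second bullet. Your greedy staircase rule ``set the slope to $+1$ if $g^\epsilon(y)<F_\epsilon(y+1)$ and $-1$ otherwise'' does \emph{not} preserve nondecreasing differences, and your proposed fix (make the increment a monotone function of $F_\epsilon(y+1)-g^\epsilon_F(y)$) cannot repair it, because the increment then depends on $g^\epsilon_F(y)$, which carries the entire history and introduces feedback. Concretely: take $F(0)=0,\ F(1)=-\tfrac12,\ F(2)=-\tfrac12$ and $G(0)=0,\ G(1)=\tfrac12,\ G(2)=\tfrac12$, so $G-F=(0,1,1)$ is nondecreasing. Your rule gives $g^\epsilon_F=(0,-1,0)$ and $g^\epsilon_G=(0,1,0)$, hence $g^\epsilon_G-g^\epsilon_F=(0,2,0)$, which decreases. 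The analogy with the single-excitation proof of \eqref{A} is misleading: in \eqref{A} the ``increment'' at a bond depends on the \emph{occupation variables} $\eta^1,\eta^2$ at that bond, and the invariant $\eta^1\le\eta^2$ is preserved; here you would need to maintain an invariant on $g^\epsilon_G-g^\epsilon_F$ that survives the feedback, and no such invariant is available for the greedy rule.

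The paper's construction avoids this by making the increment at each lattice bond a function of the \emph{derivative} $f'$ sampled on a coarse mesh, with no dependence on the constructed height whatsoever: on each mesh interval $I$ one lays down $\mathrm{sign}(f'(x_I))$ increments for a number of steps proportional to $|f'(x_I)|$, then fills the rest of $I$ with a fixed alternating pattern $(-1)^{x/\epsilon}$ independent of $f$. Because the increment at every single bond is then a nondecreasing function of $f'(x_I)$ alone, the implication $g'\ge f' \Rightarrow \mathcal A^\epsilon(g)-\mathcal A^\epsilon(f)$ nondecreasing is immediate bond-by-bond. The price is that one must separately check pointwise convergence via a Riemann-sum argument and handle the unbounded domain by a crude oscillating extension outside $[-M_\epsilon,M_\epsilon]$; but the monotonicity, which you correctly identified as the crux, becomes trivial. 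Your scheme needs to be replaced by one with this ``feedforward'' property.
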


\begin{proof}
We will construct $\mathcal A^\epsilon(f)$ on $\epsilon\Bbb Z$. The values in between are understood to be linearly interpolated.
\\
\\
Note that $(1+|x|)^{-\delta'}(|f(x)|+|f'(x)|)$ can be viewed as a continuous function on the \textit{closed} interval $[-\infty,\infty],$ which vanishes at the endpoints $-\infty$ and $\infty$. Suppose $(1+|x|)^{-\delta'}(|f(x)|+|f'(x)|)<2\epsilon^{1/2}$ on $[-\infty, -M]\cup [M,\infty]$ where implicitly $M=M_\epsilon\geq 1$. We define $\mathcal A^\epsilon(f)$ on $(-\infty, -M]$ to just oscillate between the two values in $\epsilon^{1/2} \Bbb Z$ which are closest to $f(-M)$.
\\
\\
Next we define $\mathcal A^\epsilon(f)$ on the interval $[-M,M]$. Break $[-M,M]$ into $\lfloor \epsilon^{-1/4} \rfloor$ equally sized intervals of length $2M/\lfloor\epsilon^{-1/4}\rfloor.$ On each of those intervals $I$, let $x^\epsilon_I:=\min I\cap \epsilon \Bbb Z.$ For $x\in \epsilon \Bbb Z$ such that $x_I\leq x\leq \max\{ x_I + |f'(x_I)|\epsilon^{3/4}, \max I\cap \epsilon \Bbb Z\}$ we define $f$ inductively by the formula $\mathcal A^\epsilon f(x+\epsilon)-\mathcal A^\epsilon f(x):=$ sign$(f'(x_I))\epsilon^{1/2}$. For $x \in \epsilon \Bbb Z$ such that $x_I+|f'(x_I)|\epsilon^{3/4} \leq x \leq \max I\cap \epsilon\Bbb Z$ we simply define $\mathcal A^\epsilon f(x+\epsilon)-\mathcal A^\epsilon f(x):=\epsilon^{1/2} (-1)^{x/\epsilon}.$
\\
\\
Finally, define $\mathcal A^\epsilon(f)$ on $[M,\infty)$ by the formula $\mathcal A^\epsilon(f)(x+\epsilon)-\mathcal A^\epsilon (f)(x):= \epsilon^{1/2} (-1)^{x/\epsilon}.$ 
\\
\\
From our construction it is clear that $\mathcal A^\epsilon(g)-\mathcal A^\epsilon(f)$ is nondecreasing whenever $g-f$ is nondecreasing. This is because the latter is equivalent to $g'\geq f'.$
\\
\\
Note that for all $x \in [-M,M]$, the quantity $\mathcal A^\epsilon (f)(x)$ is always within $O(\epsilon^{1/4})$ of $f(-M)+\epsilon^{1/4} \sum_{I:\sup I<x} f'(x_I),$ which is a Riemann sum approximation to $f(-M) + \int_{-M}^x f'(t)dt$. Consequently $\mathcal A^\epsilon (f)$ converges pointwise to $f$ as $\epsilon \to 0$.
\\
\\
Next we prove that there exists $C>0$ independent of $x,y, \epsilon$ (but in general dependent on $f$) such that $|\mathcal A^\epsilon f(x)-\mathcal A^\epsilon f(y)|\leq C|x|^\delta |x-y|^{1/4}$ whenever $|y-x|\leq 1$ and $x \in \Bbb R.$ This is enough to prove relative precompactness of $\{\mathcal A^\epsilon (f)\}_{\epsilon \in (0,1]}$ inside of $\mathscr C^\alpha_{\delta'}$ (because $\alpha<1/2$ and $\delta'>\delta$), which would finish the proof. To prove this inequality, we first consider the case where $x,y\in [-M,M]$ and $1\ge |x-y|>\epsilon^{3/4}.$ In this case, note that $\mathcal A^\epsilon f(x)-\mathcal A^\epsilon f(y)$ is always within $\|f'\|_{L^{\infty}([y,x])} \epsilon^{1/4}$ of $\epsilon^{1/4} \sum_{I: y<\sup I < x} f'(x_I)$. Now the number of intervals $I$ in the approximation scheme such that such that $y<\sup I<x$ is bounded above by $\epsilon^{-1/4}/2M \leq \epsilon^{-1/4}.$ Consequently we find that $|\mathcal A^\epsilon f(x)-\mathcal A^\epsilon f(y)| \leq 2\epsilon^{1/4} \|f'\|_{L^\infty([y,x])}.$ Now since $f \in \mathscr C^1_\delta$ and $|y-x| \leq 1$ we find that $\|f'\|_{L^\infty[y,x]} \leq \|f\|_{\mathscr C^1_\delta} |x|^{\delta},$ proving the claim in this case since $\epsilon^{1/4}$ can be bounded above by $|x-y|^{1/3}$ (recall we assumed $|x-y|>\epsilon^{3/4})$. Next we consider the case where $\epsilon<|x-y|\leq \epsilon^{3/4}. $ Then we can use the naive bound 
\begin{align*}|\mathcal A^\epsilon f(x)-\mathcal A^\epsilon f(y)| &\leq \epsilon^{1/2} + \sum_{u \in \epsilon \Bbb Z \cap [x,y]} |\mathcal A^\epsilon f(u+\epsilon)-\mathcal A^\epsilon f(u)| \\& \leq \epsilon^{1/2} + \epsilon^{-1/2}|x-y| \\&\leq \epsilon^{1/2} + \epsilon^{1/4} \leq 2|x-y|^{1/4}.
\end{align*}
Finally, we consider the case where $|x-y|<\epsilon.$ In this case it is clear that since the global Lipchitz constant of $\mathcal A^\epsilon (f)$ never exceeds $\epsilon^{-1/2}$ that one has $$|\mathcal A^\epsilon(f)(x)-\mathcal A^\epsilon(f)(y)| \leq \epsilon^{-1/2}|x-y|\leq \epsilon^{-1/2} \epsilon^{1/2} |x-y|^{1/2} = |x-y|^{1/2}.$$
\end{proof}

\begin{prop}\label{prop} Suppose that we have two viable deterministic sequences of initial data such that their re-scaled versions $h_0^{1,\epsilon}$ and $h_0^{2,\epsilon}$ converge in $\mathscr C^\alpha_\delta$ to functions $H_0^1$ and $H_0^2$ respectively, where $0<\alpha<1/2$ and $0<\delta<1.$ Assume that $H_0^1,H_0^2\in \mathscr C^1_\delta$. Then for any joint limit point $(H^1,H^2)$ of $(h^{1,\epsilon},h^{2,\epsilon})$, $H^1$ and $H^2$ are solutions of the KPZ equation driven by the same noise.
\end{prop}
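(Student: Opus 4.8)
The plan is to reduce the statement to Lemma \ref{lem2} by comparing the given (a priori unordered) initial data with \emph{ordered} approximations produced by Lemma \ref{two}, and then transferring the conclusion back to the original processes using Lemma \ref{lem1}.

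Fix $\delta<\delta'\in(0,1)$ and recall $\alpha\in(0,1/4)$. First I would construct a third continuum profile dominating the other two: set
\[ H_0^3(x):=\max\{H_0^1(0),H_0^2(0)\}+\int_0^x \max\{(H_0^1)'(u),(H_0^2)'(u)\}\,du. \]
Since $H_0^1,H_0^2\in\mathscr C^1_\delta$, the integrand is continuous and pointwise bounded by $|(H_0^1)'|+|(H_0^2)'|$, so $H_0^3\in\mathscr C^1_\delta$; and $(H_0^3-H_0^i)'=\max\{(H_0^1)',(H_0^2)'\}-(H_0^i)'\geq 0$, so $H_0^3-H_0^1$ and $H_0^3-H_0^2$ are nondecreasing. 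Then I would apply the maps of Lemma \ref{two} (with these $\alpha,\delta,\delta'$) and put $g_0^{i,\epsilon}:=\mathcal A^\epsilon(H_0^i)\in\mathscr C^\alpha_\delta\cap\mathcal V_\epsilon$ for $i=1,2,3$; by the two properties in Lemma \ref{two} we get $g_0^{i,\epsilon}\to H_0^i$ in $\mathscr C^\alpha_{\delta'}$, while $g_0^{3,\epsilon}-g_0^{1,\epsilon}$ and $g_0^{3,\epsilon}-g_0^{2,\epsilon}$ are nondecreasing for every $\epsilon$. Each $g_0^{i,\epsilon}$ is a rescaled viable height function, so it has an associated particle system, and I would run all five of $h^{1,\epsilon},h^{2,\epsilon},g^{1,\epsilon},g^{2,\epsilon},g^{3,\epsilon}$ simultaneously under the common basic coupling.

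Two inputs then do the work. First, applying Lemma \ref{lem2} to the deterministic, weakly convergent pairs $(g_0^{1,\epsilon},g_0^{3,\epsilon})$ and $(g_0^{2,\epsilon},g_0^{3,\epsilon})$, which have nondecreasing differences, together with Remark \ref{>2}: any joint limit point $(G^1,G^2,G^3)$ of $(g^{1,\epsilon},g^{2,\epsilon},g^{3,\epsilon})$ consists of Hopf--Cole solutions of \eqref{KPZ} with initial data $H_0^1,H_0^2,H_0^3$, all driven by one and the same noise $\xi$. Second, for $i=1,2$, both $h_0^{i,\epsilon}$ and $g_0^{i,\epsilon}$ are deterministic and converge to the \emph{same} limit $H_0^i$ in $\mathscr C^\alpha_{\delta'}$ (for $h_0^{i,\epsilon}$ because the $\mathscr C^\alpha_{\delta'}$-norm is dominated by the $\mathscr C^\alpha_\delta$-norm), so Lemma \ref{lem1} applies to $(h^{i,\epsilon},g^{i,\epsilon})$ and shows that any joint limit point $(H^i,G^i)$ satisfies $H^i=G^i$ almost surely. (Both Lemma \ref{lem1} and Theorem \ref{bg97} only use convergence of the initial data in some $\mathscr C^\gamma_\beta$ with $0<\gamma<1/2$ and $0<\beta<1$, which $\mathscr C^\alpha_{\delta'}$ provides.)

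Finally I would assemble. Given a joint limit point $(H^1,H^2)$ of $(h^{1,\epsilon},h^{2,\epsilon})$, realized along some $\epsilon_n\downarrow 0$: each of $h^{i,\epsilon},g^{i,\epsilon}$ converges in law by Theorem \ref{bg97}, and the discrete martingale measures that Bertini--Giacomin use to build the driving noises are tight as well, so along a further subsequence the whole ensemble of height functions (and their associated noises) converges jointly; write $(H^1,H^2,G^1,G^2,G^3)$ for the limiting height functions. The sub-tuple $(G^1,G^2,G^3)$ is a joint limit point of $(g^{1,\epsilon},g^{2,\epsilon},g^{3,\epsilon})$, so the $G^i$ are driven by a common $\xi$; and each $(H^i,G^i)$ is a joint limit point of $(h^{i,\epsilon},g^{i,\epsilon})$, so $H^i=G^i$ a.s. for $i=1,2$. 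Hence $H^1=G^1$ and $H^2=G^2$ are Hopf--Cole solutions of \eqref{KPZ} driven by the same noise, which is the claim. The main obstacle is conceptual rather than computational: the prescribed initial data carry no monotonicity, so Lemma \ref{lem2} cannot be used directly; the device is to route through the ordered approximations $\mathcal A^\epsilon(H_0^i)$ and to use that Lemma \ref{lem1} pins down limit points themselves (not merely their laws), which is exactly what lets the ``same noise'' property be carried back to $(h^{1,\epsilon},h^{2,\epsilon})$.
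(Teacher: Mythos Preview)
Your argument is correct and is essentially the paper's own proof: build a dominating profile $H_0^3$ via the pointwise maximum of derivatives, use the monotonicity-preserving approximation of Lemma \ref{two} to produce ordered viable initial data, apply Lemma \ref{lem2} (with Remark \ref{>2}) to the ordered triples, and invoke Lemma \ref{lem1} to identify the auxiliary limits with the original $H^1,H^2$. The only cosmetic differences are naming (your $g^{1,\epsilon},g^{2,\epsilon},g^{3,\epsilon}$ are the paper's $h^{4,\epsilon},h^{5,\epsilon},h^{3,\epsilon}$) and a minor slip in writing ``recall $\alpha\in(0,1/4)$'' when the hypothesis only gives $\alpha\in(0,1/2)$; just pick a fresh $\alpha'<\min\{\alpha,1/4\}$ for the application of Lemma \ref{two}, as the paper does.
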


\begin{proof}
Choose some probability space $(\Omega, \mathcal F, \Bbb P)$ on which one may define, for each $\epsilon\in (0,1],$ a system of i.i.d. Poisson clocks of rate $p=\frac12+\frac12\sqrt\epsilon$ and rate $q=\frac12-\frac12\sqrt{\epsilon}$ associated to each bond $\{x,x+1\}$ with $x \in \Bbb Z$. For different values of $\epsilon$ these can be coupled in an arbitrary manner; ultimately it is irrelevant.
\\
\\
%\begin{itemize}%\item $h^{1,\epsilon}_0$ and $h^{2,\epsilon}_0$ converge a.s. (uniformly on compacts) to $H_0^1$ and $H_0^2$, respectively.%\item For each $\epsilon$ there is a system of i.i.d. Poisson clocks of rate $p=\frac12+\frac12\sqrt\epsilon$ and rate $q=\frac12-\frac12\sqrt{\epsilon}$ associated to each bond $\{x,x+1\}$ with $x \in \Bbb Z$ (for different values of $\epsilon$ these can be coupled in an arbitrary manner; ultimately it is irrelevant).%\item There is a collection $\{U_j\}_{j \in \Bbb Z}$ of i.i.d. uniform random variables. %\item The objects from each of the three previous bullet points are independent of each other.%\end{itemize}
Define $H^3_0(x): = \int_0^x \max\{\partial_xH^1_0(u),\partial_xH^2_0(u)\} du,$ so that $H^3_0-H^1_0$ and $H^3_0 - H^2_0$ are both nondecreasing functions. Note that $H_0^3$ also lies in $\mathscr C_\delta^1.$
\\
\\
Next, use the algorithm in Lemma \ref{two} to construct viable height functions $h_0^{i,\epsilon}$ for $3\le i \le 5$ in such a way that
\begin{itemize}
    \item $\epsilon^{1/2}h^{3,\epsilon}_0(\epsilon^{-1}x)$ converges in $\mathscr C_{\delta'}^{\alpha'}$ to $H^3(x)$ for some $\delta'>\delta$ and $\alpha'<\alpha$.
    
    \item $\epsilon^{1/2}h^{4,\epsilon}_0(\epsilon^{-1}x)$ converges in $\mathscr C_{\delta'}^{\alpha'}$ to $H^1(x)$ for some $\delta'>\delta$ and $\alpha'<\alpha$.
    
    \item $\epsilon^{1/2}h^{5,\epsilon}_0(\epsilon^{-1}x)$ converges in $\mathscr C_{\delta'}^{\alpha'}$ to $H^2(x)$ for some $\delta'>\delta$ and $\alpha'<\alpha$.
    
    %\item $\{\epsilon^{1/2}h^{i,\epsilon}_0(\epsilon^{-1}\cdot)\}_{3\le i \le 5}$ are admissible,
    
    \item $h_0^{3,\epsilon}-h_0^{i,\epsilon}$ are nondecreasing in $x$ for $i=4,5$ and all $\epsilon$.

\end{itemize}

On the same probability space, let $h^{1,\epsilon}$ and $h^{2,\epsilon}$ be the (time-evolving) height profiles started from initial data $h_0^{1,\epsilon}$ and $h_0^{2,\epsilon}$ (respectively) and whose dynamics are governed by the Poisson clocks described above. Then let $h^{3,\epsilon}, h^{4,\epsilon}$ and $h^{5,\epsilon}$ be the height functions associated with initial data $h_0^{3,\epsilon},h_0^{4,\epsilon}, h_0^{5,\epsilon}$, respectively.
\\
\\
Let $(H^1,H^2,H^3,H^4,H^5)$ be a joint limit point of $(h^{1,\epsilon},h^{2,\epsilon},h^{3,\epsilon},h^{4,\epsilon},h^{5,\epsilon})$, which is \textbf{not} necessarily defined on the same probability space as above. Then let $(\xi^1,\xi^2,\xi^3,\xi^4,\xi^5)$ denote the respective driving noises. By Lemma \ref{lem1} we know that $H^4=H^1$ and $H^5=H^2$, therefore $\xi^4=\xi^1$ and $\xi^5=\xi^2$ (for instance by Theorem \ref{mon1}). But since $h_0^{3,\epsilon}-h_0^{i,\epsilon}$ are nondecreasing in $x$ for $i\in\{4,5\}$, Lemma \ref{lem2} implies that $\xi^4=\xi^3$ and $\xi^5=\xi^3$. So all noises are equal, and in particular $\xi^1=\xi^2$.
\end{proof}

We are now ready to state and prove the main result for two arbitrary initial data which converge in $\mathscr C_\delta^\alpha$. The idea will be to nest the two initial data between smooth initial data satisfying the hypotheses of Proposition \ref{prop}, and then take advantage of the monotonicity \eqref{M}. Recall our notation that the subscript ``0" in $H_0^i$ denotes the (deterministic) initial data of the height profile, while $H^i$ without the subscript denotes the entire space-time profile viewed as a random variable in some Skorohod space $D([0,\infty),C(\Bbb R))$.

\begin{thm}\label{mr}
Let $\eta_t^{1,\epsilon}$ and $\eta_t^{2,\epsilon}$ be sequences (indexed by $\epsilon\in(0,1]$) of exclusion processes with generator \eqref{gen} on $\{0,1\}^{\Bbb Z}$ with $b(-1,1,0) = \frac12+\frac12\sqrt \epsilon$ and $b(1,1,0)=\frac12-\frac12\sqrt\epsilon$. Assume that the dynamics are run via the basic coupling as described above in Subsection 3.1. Let $h^{1,\epsilon},h^{2,\epsilon}$ denote the rescaled height functions as in \eqref{resc}. Suppose that the deterministic sequences of initial data $h_0^{1,\epsilon},h_0^{2,\epsilon}$ converge in $\mathscr C_\delta^\alpha$ as $\epsilon \to 0$ to $H_0^1,H_0^2$ respectively, where $0<\alpha<1/2$ and $0<\delta<1$. Then one has joint convergence in law as $\epsilon \to 0$ of the entire time-evloving height profile $(h^{1,\epsilon},h^{2,\epsilon})$ to $(H^1,H^2)$ where $H^1,H^2$ both solve the KPZ equation with the same noise and with initial data $H_0^1,H_0^2,$ resp. The convergence holds with respect to the topology of $D([0,T],C(\Bbb R)^2).$
\end{thm}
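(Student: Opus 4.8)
The plan is to reduce the general case to Proposition \ref{prop} by sandwiching the two arbitrary initial profiles between smooth ones, using the monotonicity property \eqref{M} and a standard tightness-plus-characterization argument for the Skorohod convergence. First I would establish tightness of the pair $(h^{1,\epsilon},h^{2,\epsilon})$ in $D([0,T],C(\Bbb R)^2)$: by Theorem \ref{bg97} each marginal $h^{i,\epsilon}$ converges in law (hence is tight), and tightness of a pair in a product Skorohod space follows from tightness of the marginals. Therefore any subsequence has a further subsequence along which $(h^{1,\epsilon},h^{2,\epsilon})$ converges jointly in law to some limit point $(H^1,H^2)$, with each $H^i$ a Hopf-Cole solution of \eqref{KPZ} started from $H_0^i$ (again by Theorem \ref{bg97}). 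It then suffices to show that \emph{every} such joint limit point $(H^1,H^2)$ has $H^1$ and $H^2$ driven by the same noise; since the marginal laws and the coupling structure are then pinned down, all subsequential limits coincide and the full sequence converges.

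So fix a joint limit point $(H^1,H^2)$. The key step is to build, for a given small $\rho>0$, smooth functions $G_0, g_0^1, g_0^2 \in \mathscr C^1_\delta$ with $g_0^i$ uniformly close (in $\mathscr C^\alpha_{\delta'}$, $\delta'>\delta$) to $H_0^i$ and $G_0 - g_0^i$ nondecreasing for $i=1,2$ — e.g. take $g_0^i$ a mollification of $H_0^i$ and $G_0(x) := \int_0^x \max\{(g_0^1)'(u),(g_0^2)'(u)\}\,du$ shifted up appropriately, noting one must verify the mollifications stay in $\mathscr C^1_\delta$ and converge to $H_0^i$ in the weighted Hölder topology, which holds because $\mathscr C^{\alpha}_{\delta}$ embeds compactly into $\mathscr C^{\alpha'}_{\delta'}$. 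One also needs \emph{viable} prelimit approximations: using Lemma \ref{two} construct $g_0^{i,\epsilon}, G_0^\epsilon \in \mathcal V_\epsilon$ approximating $g_0^i, G_0$ in $\mathscr C^{\alpha'}_{\delta'}$ with $G_0^\epsilon - g_0^{i,\epsilon}$ nondecreasing. Run all of $h^{1,\epsilon},h^{2,\epsilon},g^{1,\epsilon},g^{2,\epsilon},G^\epsilon$ under the \emph{same} basic coupling on one probability space, and pass to a joint subsequential limit $(H^1,H^2,\mathcal G^1,\mathcal G^2,\mathcal G)$. By Proposition \ref{prop}, $\mathcal G^1,\mathcal G^2,\mathcal G$ are all driven by the same noise $\xi$ (since $G_0 - g_0^i$ nondecreasing triggers Lemma \ref{lem2}, and $\mathcal G^i$ vs. the nested data triggers Lemma \ref{lem1}/Proposition \ref{prop}). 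It remains to control $H^i$ against $\mathcal G^i$ as $\rho \to 0$.

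Here is where the main obstacle lies. Even though $h_0^{i,\epsilon}$ and $g_0^{i,\epsilon}$ are close in $\mathscr C^\alpha_\delta$, they need not be \emph{ordered} at the particle level, so \eqref{M} does not directly compare $h^{i,\epsilon}$ with $g^{i,\epsilon}$. The fix is to sandwich: introduce $\underline g_0^{i,\epsilon} := $ a viable function lying pointwise below $h_0^{i,\epsilon}$ yet within $O(\rho)$ of it (and likewise $\overline g_0^{i,\epsilon}$ above), each still converging to something $\mathscr C^1_\delta$-close to $H_0^i$; by \eqref{M}, $\underline g^{i,\epsilon}(t,x) \le h^{i,\epsilon}(t,x) \le \overline g^{i,\epsilon}(t,x)$ for all $t,x,\epsilon$, hence $\underline{\mathcal G}^i \le H^i \le \overline{\mathcal G}^i$ in the limit, and by Proposition \ref{prop} all of $\underline{\mathcal G}^i, \overline{\mathcal G}^i$ (for both $i$) share the \emph{same} driving noise $\xi_\rho$. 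Using $L^p$/exponential moment bounds à la Bertini-Giacomin (the Hopf-Cole transform $Z = e^{H}$, continuity of the Itô-Walsh solution map in the initial measure), one shows $\Bbb E|\overline{\mathcal G}^i(t,x) - \underline{\mathcal G}^i(t,x)| \to 0$ as $\rho \to 0$, so $H^i = \lim_{\rho\to 0}\mathcal G^i_\rho$ in probability with $\mathcal G^i_\rho$ driven by $\xi_\rho$. Since the noise is recovered deterministically and continuously from a solution-in-law (the fact invoked in Remark \ref{>2}, via the orthomartingale construction), $H^1$ and $H^2$ are driven by the common limit of the $\xi_\rho$, i.e. by the same noise. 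This closes the characterization of the limit point and completes the proof.
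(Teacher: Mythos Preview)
Your proposal is correct and follows essentially the same route as the paper: take a joint subsequential limit, sandwich each $H_0^i$ between smooth functions in $\mathscr C^1_{\delta'}$, lift those to viable prelimit data, invoke Proposition~\ref{prop} (with Remark~\ref{>2}) to see that all smooth approximations share a common driving noise, and then squeeze $H^i$ via \eqref{M} and continuity of the KPZ solution map in the initial condition. Your introduction of $G_0$ is redundant, since that construction already lives inside the proof of Proposition~\ref{prop}.

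The one place where the paper is cleaner than your write-up is the closing step. You phrase it as passing to a ``limit of the $\xi_\rho$'' and appeal to the noise being recovered \emph{continuously} from a solution; that continuity is neither proved nor needed. The paper instead takes the joint limit point for \emph{all} $N$ simultaneously (a countable family), so that by Proposition~\ref{prop} and Remark~\ref{>2} every $a^N,b^N,c^N,d^N$ is driven by one and the same noise $\xi$, fixed once and for all. Then $b^N\le H^1\le a^N$ together with continuity of the solution map in the initial data \emph{for that fixed $\xi$} forces $H^1$ (and likewise $H^2$) to be the $\xi$-driven solution. If you couple all your $\rho$'s at once, your $\xi_\rho$'s coincide and your limit becomes trivial, collapsing to exactly the paper's argument.
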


\begin{proof}
Choose some probability space $(\Omega, \mathcal F, \Bbb P)$ on which one may define, for each $\epsilon\in(0,1],$ a system of i.i.d. Poisson clocks of rate $p=\frac12+\frac12\sqrt\epsilon$ and rate $q=\frac12-\frac12\sqrt{\epsilon}$ associated to each bond $\{x,x+1\}$ with $x \in \Bbb Z$. For different values of $\epsilon$ these can be coupled in an arbitrary manner; ultimately it is irrelevant.
\\
\\
%\begin{itemize}%\item $h^{1,\epsilon}_0$ and $h^{2,\epsilon}_0$ converge a.s. (uniformly on compacts) to $H_0^1$ and $H_0^2$, respectively.%\item For each $\epsilon$ there is a system of i.i.d. Poisson clocks of rate $p=\frac12+\frac12\sqrt\epsilon$ and rate $q=\frac12-\frac12\sqrt{\epsilon}$ associated to each bond $\{x,x+1\}$ with $x \in \Bbb Z$ (for different values of $\epsilon$ these can be coupled in an arbitrary manner; ultimately it is irrelevant).%\item There is a collection $\{U_j\}_{j \in \Bbb Z}$ of i.i.d. uniform random variables. %\item The objects from each of the three previous bullet points are independent of each other.%\end{itemize}
Choose arbitrary sequences of smooth approximating height functions $a_0^{N},b_0^{N}, c_0^{N}, d_0^{N}$, for $N \in \Bbb N$, satisfying the following five properties:
\begin{itemize}
    \item $a_0^{N},b_0^{N}, c_0^{N}, d_0^{N} \in \mathscr C^1_{\delta'}$ for some $\delta'>\delta$.
    
    \item $b_0^N \leq H^1_0\leq a_0^N$.
    
    \item $d_0^N \leq H^2_0 \leq c_0^N.$
    
    \item $a_0^N,b_0^N$ converge in $\mathscr C^{\alpha'}_{\delta'}$ to $H^1_0$ for some $\alpha'<\alpha$ and $\delta'$ as above.
    
    \item $c_0^N,d_0^N$ converge in $\mathscr C^{\alpha'}_{\delta'}$ to $H^2_0$ with $\alpha',\delta'$ as above.

\end{itemize}
The existence of such sequences is straightforward. Indeed, one can even choose $(\alpha',\delta')$ arbitrarily from $(0,\alpha)\times (\delta,1)$ and then take $a_0^N$ and $c_0^N$ to coincide with $|x|^{(\delta+\delta')/2}$ in some neighborhood of $\pm\infty$ and similarly one can choose $b_0^N$ and $d_0^N$ to coincide with $-|x|^{(\delta+\delta')/2}$ in some neighborhood in $\pm\infty$ (this neighborhood will obviously depend on $N$ though).
\\
\\
Now use Lemma \ref{two} to define viable height functions $a_0^{N,\epsilon},b_0^{N,\epsilon},c_0^{N,\epsilon},d_0^{N,\epsilon}$ which are jointly admissible and converge under the appropriate scaling to $a_0^N,b_0^N,c_0^N,d_0^N$, respectively. Let $(h^{1,\epsilon}_t,h^{2,\epsilon}_t,a^{N,\epsilon}_t,b^{N,\epsilon}_t,c^{N,\epsilon}_t, d^{N,\epsilon}_t)_{t \ge 0}$ denote the (time-evolving) height profiles associated with initial data $(h^{1,\epsilon}_0,h^{2,\epsilon}_0,a^{N,\epsilon}_0,b^{N,\epsilon}_0,c^{N,\epsilon}_0, d^{N,\epsilon}_0),$ respectively. The dynamics for each of these objects are run according to the Poisson clocks described above.
\\
\\
Let $(H^1,H^2,a^N,b^N,c^N,d^N)$ denote a joint limit point of all of these objects (as $\epsilon \to 0)$, which is not necessarily defined on the same probability space. By Proposition \ref{prop}, all of $a^N,b^N,c^N,d^N$ solve the KPZ equation with the same realization of the noise $\xi$ (we are using Remark \ref{>2} here). 
\\
\\
Recall by construction, $a_0^N$ and $b_0^N$ converge in $\mathscr C_{\delta'}^{\alpha'}$ to $H_0^1$ as $N \to \infty$, and moreover $b^N_0 \leq H^1_0 \leq a^N_0$. Note that for a fixed realization of $\xi$, the solution of the KPZ equation is continuous as a function of the initial data, viewed as a function from $ \mathscr C^{\alpha'}_{\delta'} (\Bbb R)\to C(\Bbb R_+\times \Bbb R)$. This can be proved directly from Definition \ref{hopf} by exploiting Mueller's positivity result \cite{Mue91}, and it can also be proved more directly by using more modern techniques such as \cite{Hai13, PR19}. Thus, $b^N-a^N$ converges uniformly to $0$ on compact sets of $\Bbb R_+\times \Bbb R$, and moreover by \eqref{M} it is true that $b^N \leq H^1 \leq a^N$. Thus $a^N,b^N$ both converge uniformly as $N\to \infty$ to $H^1$ on compact subsets of $\Bbb R_+\times \Bbb R$, and on the other hand they also converge to the solution of the KPZ equation driven by the common noise of the $a^i,b^i$ and initial data $H^1_0$. Thus, we conclude that $H^1$ is driven by the same noise as the $a^i,b^i$. 
\\
\\
A completely analogous argument will show that $H^2$ is driven by the same noise as the $c^i,d^i$, completing the proof.
\end{proof}

One can ask why, in the above proof, one could not have defined simpler approximations $H^i * \phi_{\delta}$ and then just used the fact the the KPZ equation is continuous as a function of the initial data on $\mathscr C^{\alpha'}_{\delta'}(\Bbb R)$, and used Proposition \ref{prop} without relying on \eqref{M}. The problem with this idea is that it would be circular: we do not know beforehand that the joint limit points all solve the KPZ equation with the same noise: therefore we do not know that they are continuous as a function of the initial data. Hence some kind of monotonicity property must be leveraged. %However the next section will show that we need not assume that our model satisfies something as strong as \eqref{M}.
\iffalse
\\
\\
Our next result describes a different kind of initial data: the infinite wedge. The proof will be much simpler than that of Theorem \ref{mr} because in this case, Theorem \ref{mon1} is almost directly applicable.

\begin{thm}[Infinite wedge version]\label{nw}
The conclusion of Theorem \ref{mr} still holds when one considers the two sequences of initial data which converge to narrow wedges started from different macroscopic points.
\end{thm}

\begin{proof} Let $(H^1,H^2)$ denote a joint limit point of the coupled ASEP $(h^{1,\epsilon},h^{2,\epsilon})$ started from different narrow wedges. Let $(H^1,H^2)$ denote a joint limit point, let $(\xi^1,\xi^2)$ denote the associated noises, and fix $s>0$. let $K^1(t,x):=H^1(s+t,x)$ and $K^2(t,x):=H^2(s+t,x)$.
Then $K^1(0,x)$ and $K^2(0,x)$ are both continuous functions with at-worst parabolic growth at infinity, hence they are convolvable with the heat kernel. Moreover they both solve the KPZ equation in law with driving noises $\xi^1(s+\cdot,\cdot)$ and $\xi^2(s+\cdot,\cdot)$ respectively. Furthermore it is clear that $K^1(t,\cdot) - K^2(t,\cdot)$ is a.s. monotone for every $t \ge 0$ (because in the prelimit we are starting the process from two narrow wedges, one of which must dominate the other). Thus by Theorem \ref{mon1} it is true that $\xi^1(s+\cdot,\cdot) = \xi^2(s+\cdot,\cdot)$. Since $s>0$ can be made arbitrarily small, it follows that $\xi^1=\xi^2$.
\end{proof}
\fi

\subsection{Random initial conditions near stationarity}

In the above theorem we assumed that $h_0^{1,\epsilon}$ and $h_0^{2,\epsilon}$ were deterministic and converged with respect to the topology of $\mathscr C_\delta^\alpha(\Bbb R)$ for some $0<\alpha<1/2$ and some $0<\delta<1.$ In this subsection we relax these conditions slightly to allow for \textit{random} sequences of pairs of initial data that may only converge in distribution and satisfy some $p^{th}$ moment bounds that are generally easy to check in practice. The prototypical examples to keep in mind for this subsection are the height function pairs generated by iid Bernoulli configurations. These two product Bernoulli configurations may be independent or correlated by some parameter; it does not matter so long as the finite-dimensional marginals for the pair of height functions converge \textit{jointly} in law.
\\
\\
We denote by $\|X\|_p:=\Bbb E[|X|^p]^{1/p}$ for a random variable $X$ defined on some probability space.

\begin{thm}\label{mr2}
The conclusion of Theorem \ref{mr} still holds for random initial data $(h_0^{1,\epsilon},h_0^{2,\epsilon})$ so long as this pair converges jointly in the sense of finite dimensional distributions to $(H_0^1,H_0^2)$ and there exist $\alpha\in (0,1/2]$,$\delta \in(0,1), C>0$, and $p>\max\{\alpha^{-1},(1-\delta)^{-1}\}$ such that for all $\epsilon\in(0,1]$ the pair satisfies the moment bounds $$\| h_0^{i,\epsilon}(x)\|_p \leq C(1+|x|)^{\delta} \text{ for all } x\in \Bbb R$$
$$\|h_0^{i,\epsilon}(x)-h_0^{i,\epsilon}(y)\|_p \leq C(1+|x|)^\delta|x-y|^\alpha \text{ whenever } |x-y|\leq 1.$$
\end{thm}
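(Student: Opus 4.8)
The idea is to bootstrap off the deterministic case, Theorem~\ref{mr}, by first upgrading the hypotheses to convergence in law in a weighted Hölder space, then invoking a Skorokhod representation so that the initial data converge almost surely, and finally conditioning on the initial data to reduce to Theorem~\ref{mr} pathwise.

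First I would turn the two $L^p$-bounds, which are precisely a localized Kolmogorov--Chentsov criterion, into a tightness statement. Choose Hölder and weight exponents $\alpha' < \alpha'' < \alpha - 1/p$ and $\delta + 1/p < \delta'' < \delta' < 1$; these ranges are nonempty exactly because $p > \max\{\alpha^{-1},(1-\delta)^{-1}\}$ (the bound $p > (1-\delta)^{-1}$ being what leaves room for $\delta'' < 1$). A standard dyadic chaining argument --- Kolmogorov--Chentsov applied on each unit interval, yielding Hölder exponent $\alpha - 1/p$ with a constant carrying the local weight $(1+|x|)^{\delta}$, followed by summation against $(1+|x|)^{-\delta''}$ over the integer lattice, which converges since $(\delta''-\delta)p > 1$ --- gives $\sup_{\epsilon}\Bbb E\big[\|h_0^{i,\epsilon}\|_{\mathscr C_{\delta''}^{\alpha''}}^q\big] < \infty$ for some $q>1$ and $i=1,2$. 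Since $\mathscr C_{\delta''}^{\alpha''}$ embeds compactly into $\mathscr C_{\delta'}^{\alpha'}$, the pair $(h_0^{1,\epsilon},h_0^{2,\epsilon})$ is then tight in $\mathscr C_{\delta'}^{\alpha'}(\Bbb R)^2$, with all laws carried by a fixed $\sigma$-compact (hence separable) subset; combined with the assumed joint convergence of finite-dimensional distributions this identifies the unique limit, so $(h_0^{1,\epsilon},h_0^{2,\epsilon}) \Rightarrow (H_0^1,H_0^2)$ in $\mathscr C_{\delta'}^{\alpha'}(\Bbb R)^2$.

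Next, fixing a sequence $\epsilon_n\downarrow 0$ and using that this convergence takes place on a separable space, Skorokhod's theorem provides a single probability space $(\widetilde\Omega,\widetilde{\mathcal F},\widetilde{\Bbb P})$ carrying copies $(\widetilde h_0^{1,\epsilon_n},\widetilde h_0^{2,\epsilon_n})$ of the initial data (with the right joint law) and a copy $(\widetilde H_0^1,\widetilde H_0^2)$ of the limit, with $(\widetilde h_0^{1,\epsilon_n},\widetilde h_0^{2,\epsilon_n}) \to (\widetilde H_0^1,\widetilde H_0^2)$ $\widetilde{\Bbb P}$-a.s.\ in $\mathscr C_{\delta'}^{\alpha'}(\Bbb R)^2$. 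I would enlarge this space to also carry, independently of $\mathcal G := \sigma\big((\widetilde H_0^1,\widetilde H_0^2),\{(\widetilde h_0^{1,\epsilon_n},\widetilde h_0^{2,\epsilon_n})\}_n\big)$, a family of i.i.d.\ rate-$(\tfrac12\pm\tfrac12\sqrt{\epsilon_n})$ Poisson clocks on the bonds of $\Bbb Z$ for each $n$, and let $\widetilde h^{1,\epsilon_n},\widetilde h^{2,\epsilon_n}$ be the rescaled, renormalized height functions obtained by running the basic coupling from $\widetilde h_0^{1,\epsilon_n},\widetilde h_0^{2,\epsilon_n}$ with these clocks; this pair has the same law as $(h^{1,\epsilon_n},h^{2,\epsilon_n})$, so it suffices to show it converges in $D([0,T],C(\Bbb R)^2)$. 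For $\widetilde{\Bbb P}$-a.e.\ $\omega$ the deterministic sequence $\{\widetilde h_0^{i,\epsilon_n}(\omega)\}_n$ converges in $\mathscr C_{\delta'}^{\alpha'}$ with $\alpha'<1/2$, $\delta'\in(0,1)$, so Theorem~\ref{mr} applies pathwise: the conditional law of $(\widetilde h^{1,\epsilon_n},\widetilde h^{2,\epsilon_n})$ given $\mathcal G$ converges, as $n\to\infty$, to the law of the pair of Hopf--Cole solutions of \eqref{KPZ} from $(\widetilde H_0^1(\omega),\widetilde H_0^2(\omega))$ driven by one common noise. Then for bounded continuous $F$ on $D([0,T],C(\Bbb R)^2)$, the conditional expectations $\Phi_n := \widetilde{\Bbb E}\big[F(\widetilde h^{1,\epsilon_n},\widetilde h^{2,\epsilon_n})\mid\mathcal G\big]$ (well-defined since the clocks are independent of $\mathcal G$) converge $\widetilde{\Bbb P}$-a.s.\ to $\widetilde{\Bbb E}[F(H^1,H^2)\mid\mathcal G]$, where conditionally on $\mathcal G$ the $H^i$ are the KPZ solutions from $\widetilde H_0^i$ with a common noise; since $|\Phi_n|\le\|F\|_\infty$, bounded convergence gives $\widetilde{\Bbb E}[F(\widetilde h^{1,\epsilon_n},\widetilde h^{2,\epsilon_n})] \to \widetilde{\Bbb E}[F(H^1,H^2)]$. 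As $F$ and $\epsilon_n$ were arbitrary this is exactly the claimed joint convergence, with $(H^1,H^2)$ solving \eqref{KPZ} from $(H_0^1,H_0^2)$ with the same noise; the generalization to $k>2$ profiles goes through as in Remark~\ref{>2}.

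Given Theorem~\ref{mr}, the remaining content is soft, and the two steps I expect to demand the most care are the following. First, the Banach spaces $\mathscr C_\delta^\alpha$ are not separable, so Skorokhod's theorem cannot be applied in $\mathscr C_{\delta'}^{\alpha'}$ as such; this forces the detour through the strictly smaller $\mathscr C_{\delta''}^{\alpha''}$ and the compact embedding, which places the limit laws on a $\sigma$-compact (separable) carrier. Second, the conditional application of Theorem~\ref{mr} must be made legitimate: one must observe that the Poisson clocks are independent of $\mathcal G$ --- so that conditioning genuinely freezes the initial data without perturbing the dynamics --- and that the map sending a convergent sequence of deterministic initial data to the law of its KPZ scaling limit is suitably measurable, so that the $\widetilde{\Bbb P}$-a.s.\ limit of $\Phi_n$ is a genuine version of the conditional expectation. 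Neither point is deep, but both need to be stated carefully.
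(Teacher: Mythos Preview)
Your proposal is correct and follows essentially the same route as the paper: the paper splits the argument into two lemmas, one (Lemma~\ref{3.4.2}) establishing tightness in $\mathscr C_{\delta'}^{\alpha'}$ from the moment bounds via a dyadic Kolmogorov--Chentsov argument and the compact embedding $\mathscr C_{\delta}^{\alpha}\hookrightarrow\mathscr C_{\delta'}^{\alpha'}$, and the other (Lemma~\ref{3.4.1}) passing from convergence in law in $\mathscr C_{\delta'}^{\alpha'}$ to the conclusion by Skorokhod representation plus a pathwise application of Theorem~\ref{mr}; the separability caveat you flag is precisely the content of the paper's footnote, and your conditioning argument is a slightly more explicit version of the paper's disintegration via the maps $Q^\epsilon$, $Q$.
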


The proof is immediately obtained by combining the results of Lemmas \ref{3.4.1} and \ref{3.4.2} given just below. Note that the rescaled height functions associated to iid Bernoulli configurations satisfy these bounds with $\alpha=\delta=1/2$ (in fact, one does not even need the extra factor of $(1+|x|)^\delta$ in the second bound).

\begin{lem}\label{3.4.1}
The conclusion of Theorem \ref{mr} still holds for random initial data $(h_0^{1,\epsilon},h_0^{2,\epsilon})$ so long as this pair converges in law to $(H_0^1,H_0^2)$ with respect to the topology of $\mathscr C_\delta^\alpha \times \mathscr C_\delta^\alpha$ for some $\alpha\in (0,1/2)$ and $\delta \in (0,1).$
\end{lem}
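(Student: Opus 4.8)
The strategy is to reduce the random, in-law convergence case to the deterministic case already established in Theorem \ref{mr} by invoking the Skorokhod representation theorem. First I would note that $\mathscr C_\delta^\alpha(\Bbb R) \times \mathscr C_\delta^\alpha(\Bbb R)$ is a separable metric space (or can be replaced by a slightly larger separable space $\mathscr C_{\delta'}^{\alpha'} \times \mathscr C_{\delta'}^{\alpha'}$ with $\alpha' < \alpha$, $\delta' > \delta$, into which the relevant sequence is tight by the compact embedding mentioned in the text). Since $(h_0^{1,\epsilon}, h_0^{2,\epsilon})$ converges in law in this space to $(H_0^1, H_0^2)$, Skorokhod's theorem provides a probability space $(\tilde\Omega, \tilde{\mathcal F}, \tilde{\Bbb P})$ carrying random variables $(\tilde h_0^{1,\epsilon}, \tilde h_0^{2,\epsilon})$ with the same law as $(h_0^{1,\epsilon}, h_0^{2,\epsilon})$ for each $\epsilon$, and a limit $(\tilde H_0^1, \tilde H_0^2) \overset{d}{=} (H_0^1, H_0^2)$, such that $(\tilde h_0^{1,\epsilon}, \tilde h_0^{2,\epsilon}) \to (\tilde H_0^1, \tilde H_0^2)$ almost surely in $\mathscr C_{\delta'}^{\alpha'} \times \mathscr C_{\delta'}^{\alpha'}$.

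Next I would enlarge this probability space to also carry, independently of the initial data, the system of Poisson clocks (and uniform variables if $J>1$) governing the basic coupling for each $\epsilon$, and run the coupled height functions $\tilde h^{1,\epsilon}, \tilde h^{2,\epsilon}$ from these initial data. The key point is that we may now work $\omega$-by-$\omega$: for $\tilde{\Bbb P}$-almost every fixed realization $\omega$ of the initial data, the sequence $\epsilon \mapsto (\tilde h_0^{1,\epsilon}(\omega), \tilde h_0^{2,\epsilon}(\omega))$ is a \emph{deterministic} sequence converging in $\mathscr C_{\delta'}^{\alpha'}$ to the pair $(\tilde H_0^1(\omega), \tilde H_0^2(\omega))$, which is a deterministic element of $\mathscr C_{\delta'}^{\alpha'}$. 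Applying Theorem \ref{mr} conditionally on $\mathcal G := \sigma(\{\tilde h_0^{i,\epsilon}, \tilde H_0^i\})$ — whose randomness is independent of the driving noise — we conclude that the conditional law of $(\tilde h^{1,\epsilon}, \tilde h^{2,\epsilon})$ given $\mathcal G$ converges weakly, for a.e. $\omega$, to the (conditional) law of $(\tilde H^1, \tilde H^2)$, where $\tilde H^1, \tilde H^2$ solve KPZ with the \emph{same} noise and respective initial data $\tilde H_0^1(\omega), \tilde H_0^2(\omega)$. Integrating out over $\mathcal G$ (using dominated convergence for bounded continuous test functionals on the Skorokhod space, together with the uniform tightness guaranteed by the moment bounds, which pass to $\tilde h^{i,\epsilon}$ since these have the same law as $h^{i,\epsilon}$) upgrades this to unconditional weak convergence of $(\tilde h^{1,\epsilon}, \tilde h^{2,\epsilon})$ to $(\tilde H^1, \tilde H^2)$ in $D([0,T], C(\Bbb R)^2)$. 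Since laws are preserved under the Skorokhod coupling, the same convergence holds for the original $(h^{1,\epsilon}, h^{2,\epsilon})$, and the limit is a pair of KPZ solutions driven by a common noise with the correct initial data.

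The main obstacle I anticipate is the measurability and integrability bookkeeping in the conditional argument: one must verify that the map $\omega \mapsto (\text{law of the coupled limit given } \mathcal G)$ is measurable and that the convergence of conditional laws is uniform enough (or dominated enough) to justify integrating over $\mathcal G$. This is where the hypothesis that the driving Poisson clocks are taken independently of the initial data is essential — it makes the conditional law well-defined and lets us quote Theorem \ref{mr} verbatim for a.e. fixed initial realization. A secondary technical point is ensuring tightness of $(h^{1,\epsilon}, h^{2,\epsilon})$ in the Skorokhod space $D([0,T], C(\Bbb R)^2)$ uniformly in $\epsilon$; this follows from the Bertini--Giacomin machinery applied to each marginal (which only uses the moment-type bounds on the initial data, satisfied here by hypothesis), combined with the fact that joint tightness of a finite family follows from marginal tightness.
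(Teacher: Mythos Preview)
Your proposal is correct and follows essentially the same route as the paper: apply Skorokhod's representation theorem (working in the slightly larger separable space $\mathscr C_{\delta'}^{\alpha'}$ via the compact embedding, exactly as in the paper's footnote), then for a.e.\ fixed realization of the initial data invoke Theorem \ref{mr} for deterministic data, and finally integrate out using dominated convergence for bounded continuous test functionals on the Skorokhod space. The paper packages the conditional step more compactly by introducing the kernel maps $Q^\epsilon, Q$ from initial data to laws on path space, but this is only a notational difference from your conditioning-on-$\mathcal G$ formulation.
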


\begin{proof}
Recall $\mathcal V^\epsilon$ which is the set of all functions of the form $\epsilon^{1/2} h(\epsilon^{-1}x)$ where $h$ is a viable height function. Fix $T>0$ and let $\mathcal M$ denote the set of all probability measures on $D([0,T],C(\Bbb R)^2)$. Define $Q^\epsilon: \mathcal (V^\epsilon \cap \mathscr C_\delta^\alpha)^2 \to \mathcal M$ by sending a rescaled pair of viable height functions $(h^1_0,h^2_0)$ to the law of the (entire time evolution of the) basically coupled ASEP height process started from $(h^1_0,h^2_0)$, with right jump parameter $\frac12+\frac12\epsilon^{1/2}$ and left jump parameter $\frac12-\frac12\epsilon^{1/2}.$
\\
\\
Likewise, define $Q: \mathscr C_\delta^\alpha(\Bbb R)^2\to \mathcal M$ by sending $(h^1_0,h^2_0)$ the solution of the KPZ equation driven by the same realization of $\xi$ started from $h^1_0,h^2_0$ respectively. Theorem \ref{mr} says precisely that $Q^\epsilon(h_0^{1,\epsilon},h_0^{2,\epsilon}) \to Q(h_0^1,h_0^2)$ whenever $(h_0^{1,\epsilon},h_0^{2,\epsilon}) \to (h_0^1,h_0^{2})$ in $(\mathscr C_\delta^\alpha)^2.$
\\
\\
Now suppose that the hypothesis of the lemma holds, i.e., $(h_0^{1,\epsilon},h_0^{2,\epsilon})$ converges in law to $(H_0^1,H_0^2)$ with respect to the topology of $\mathscr C_\delta^\alpha(\Bbb R)^2$. By Skorohod's representation theorem\footnote{One technical remark here is that the spaces $\mathscr C_\delta^\alpha$ are not separable and thus Skorohod's representation theorem may not hold, strictly speaking. In practice this is not an issue, because for $\delta<\delta'$ and $\alpha'<\alpha$ it is actually true that $\mathscr C_\delta^\alpha$ embeds \textit{compactly} into $\mathscr C_{\delta'}^{\alpha'}$, as we already mentioned earlier. Any compact metric space is separable, thus in our argument above, one should instead use almost sure convergence with respect to the weaker topology of $\mathscr C^{\alpha'}_{\delta'}$ for some $\alpha'<\alpha$ and $\delta'>\delta$. This does not cause any issues for the proof.\iffalse into the separable closed subspace of $\mathscr C_{\delta'}^{\alpha'}$ given by the closure of compactly supported smooth functions in $\mathscr C_{\delta'}^{\alpha'}$, see e.g. Theorems 2.52 and 8.27 in \cite{Wea18}. Thus in our argument above, one should instead use almost sure convergence with respect to the weaker topology of $\mathscr C^{\alpha'}_{\delta'}$ for some $\alpha'<\alpha$ and $\delta'>\delta$. This does not cause any issues for the proof.\fi} we may find a probability space $(\Omega,\mathcal F,\Bbb P)$ such that $(h_0^{1,\epsilon},h_0^{2,\epsilon}) \to (h_0^{1},h_0^{2})$ in $(\mathscr C_\delta^\alpha)^2$ almost surely. Then by the discussion above, $Q^\epsilon(h_0^{1\epsilon},h_0^{2,\epsilon}) \to Q(h_0^{1},h_0^{2})$ in $\mathcal M$ almost surely. This is enough to give the required result. Indeed it shows that $\Bbb E[f\big(Q^\epsilon(h_0^{1\epsilon},h_0^{2,\epsilon})\big)] \to \Bbb E[f\big( Q(h_0^{1},h_0^{2})\big)]$ for all bounded continuous $f:\mathcal M\to \Bbb R.$ To finish the proof one simply takes $f$ of the form $f(\nu):=\int_{D([0,T],C(\Bbb R))} g(h) \nu(dh)$ where $g$ is a bounded real-valued continuous function on $D([0,T],C(\Bbb R)).$ Then one may disintegrate the law of $h^{i,\epsilon}$ by decoupling the initial data and the dynamics to obtain the desired result.
\end{proof}

\begin{lem}\label{3.4.2}
Suppose that $\{h^{\epsilon}\}_{\epsilon \in (0,1]}$ is a family of $C(\Bbb R)$-valued random variables such that there exist $\alpha\in (0,1/2)$,$\delta \in(0,1), C>0$, and $p>\max\{\alpha^{-1},(1-\delta)^{-1}\}$ which satisfy the following moment bounds uniformly over all $\epsilon\in (0,1]$:$$\| h^{\epsilon}(x)\|_p \leq C(1+|x|)^{\delta} \text{ for all } x\in \Bbb R$$
$$\|h^{\epsilon}(x)-h^{\epsilon}(y)\|_p \leq C(1+|x|)^\delta |x-y|^\alpha \text{ whenever } |x-y|\leq 1.$$
Then there exist $\alpha'\in (0,\alpha)$ and $\delta'\in(\delta,1)$ such that $\{h^\epsilon\}_{\epsilon \in (0,1]}$ is tight with respect to the topology of $\mathscr C_{\delta'}^{\alpha'}.$
\end{lem}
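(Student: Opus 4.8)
The plan is to prove a uniform (in $\epsilon$) stochastic boundedness estimate in a weighted Hölder space with slightly better exponents than the target, and then upgrade to tightness using the compact embedding $\mathscr C^{\alpha_0}_{\delta_0} \hookrightarrow \mathscr C^{\alpha'}_{\delta'}$ (valid whenever $\alpha'<\alpha_0$ and $\delta'>\delta_0$) recalled in Subsection 3.3. Since $p>\alpha^{-1}$ we may fix $\alpha_0\in(0,\alpha-p^{-1})$, and since $p>(1-\delta)^{-1}$ we have $\delta+p^{-1}<1$ so we may fix $\delta_0\in(\delta+p^{-1},1)$; note $\alpha_0<\alpha$ and $\delta_0>\delta$. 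I will show $\lim_{R\to\infty}\sup_{\epsilon\in(0,1]}\Bbb P\big(\|h^\epsilon\|_{\mathscr C^{\alpha_0}_{\delta_0}}>R\big)=0$. Granting this, for any $\alpha'\in(0,\alpha_0)$ and $\delta'\in(\delta_0,1)$ the closed ball $\{f:\|f\|_{\mathscr C^{\alpha_0}_{\delta_0}}\le R\}$ has compact closure in $\mathscr C^{\alpha'}_{\delta'}$, and choosing $R$ large makes this set carry mass $\ge 1-\eta$ for every $\epsilon$; this is precisely tightness in $\mathscr C^{\alpha'}_{\delta'}$.

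For the key estimate, cover $\Bbb R$ by the unit intervals $I_n:=[n,n+1]$, $n\in\Bbb Z$, and note that on $I_n$ the hypothesis reads $\Bbb E[|h^\epsilon(x)-h^\epsilon(y)|^p]\le C^p(1+|n|)^{\delta p}|x-y|^{\alpha p}$ with exponent $\alpha p-1>0$. The standard quantitative form of the Kolmogorov--Chentsov continuity estimate on a bounded interval then produces a constant $C'=C'(p,\alpha,\alpha_0)$, independent of $n$ and of $\epsilon$, with $\Bbb E\big[\,[h^\epsilon]_{\alpha_0;I_n}^p\,\big]\le C'(1+|n|)^{\delta p}$, where $[\,\cdot\,]_{\alpha_0;I_n}$ is the $\alpha_0$-Hölder seminorm over $I_n$. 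Together with $\|h^\epsilon(n)\|_p\le C(1+|n|)^\delta$, the random variables $A_n:=|h^\epsilon(n)|+[h^\epsilon]_{\alpha_0;I_n}$ (which depend on $\epsilon$) satisfy $\|A_n\|_p\le C''(1+|n|)^\delta$ uniformly in $n$ and $\epsilon$.

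Next one checks the elementary inequality $\|h^\epsilon\|_{\mathscr C^{\alpha_0}_{\delta_0}}\le C''' \sup_{n\in\Bbb Z}(1+|n|)^{-\delta_0}A_n$: for the supremum part, any $x\in I_n$ has $|h^\epsilon(x)|\le A_n$ and $1+|x|\ge c(1+|n|)$; for the Hölder part, any $x,y$ with $|x-y|\le 1$ lie in at most two consecutive $I_n,I_{n+1}$, and inserting the common endpoint gives $|h^\epsilon(x)-h^\epsilon(y)|\le 2\max(A_n,A_{n+1})|x-y|^{\alpha_0}$, after again comparing $1+|x|$ with $1+|n|$. A union bound and Chebyshev's inequality then give, for $\lambda>0$,
\[
\Bbb P\Big(\sup_{n\in\Bbb Z}(1+|n|)^{-\delta_0}A_n>\lambda\Big)\le\sum_{n\in\Bbb Z}\frac{\|A_n\|_p^p}{\lambda^p(1+|n|)^{\delta_0 p}}\le\frac{(C'')^p}{\lambda^p}\sum_{n\in\Bbb Z}(1+|n|)^{(\delta-\delta_0)p},
\]
and the final sum converges exactly because $(\delta_0-\delta)p>1$. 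Hence $\Bbb P(\|h^\epsilon\|_{\mathscr C^{\alpha_0}_{\delta_0}}>R)\le C R^{-p}$ for $R$ large, uniformly in $\epsilon$, which is the required estimate.

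\textbf{Main obstacle.} There is no conceptual difficulty; the only thing to watch is the bookkeeping of the spatial weight against the regularity loss. The Kolmogorov step loses $p^{-1}$ in the Hölder exponent, forcing $\alpha_0<\alpha-p^{-1}$, while summing the contributions of the infinitely many unit boxes loses a further $p^{-1}$ in the growth exponent, forcing $\delta_0>\delta+p^{-1}$ — which is exactly why both exponents must be relaxed and exactly where the two hypotheses $p>\alpha^{-1}$ and $p>(1-\delta)^{-1}$ enter. The per-box Kolmogorov estimate and the passage to tightness via the compact Hölder embedding are standard.
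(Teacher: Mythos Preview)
Your proposal is correct and follows essentially the same approach as the paper: decompose $\Bbb R$ into unit intervals indexed by $\Bbb Z$, apply a Kolmogorov--Chentsov estimate on each interval to control the local H\"older seminorm, then sum the weighted contributions via a union bound and Chebyshev, and finally pass to tightness through the compact embedding $\mathscr C^{\alpha_0}_{\delta_0}\hookrightarrow\mathscr C^{\alpha'}_{\delta'}$. The only cosmetic difference is that the paper writes the Kolmogorov step out explicitly via the dyadic characterization of the H\"older seminorm (so that the union bound runs over both $n\in\Bbb Z$ and the dyadic scales), whereas you invoke the quantitative Kolmogorov bound $\Bbb E\big[[h^\epsilon]_{\alpha_0;I_n}^p\big]\lesssim (1+|n|)^{\delta p}$ as a black box and then sum only over $n$; the arithmetic constraints $\alpha_0<\alpha-p^{-1}$ and $\delta_0>\delta+p^{-1}$ and the role of the hypotheses $p>\alpha^{-1}$, $p>(1-\delta)^{-1}$ are identical in both.
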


\begin{proof}
Recall from earlier that $\mathscr C_{\delta}^{\alpha}$ embeds \textit{compactly} into $\mathscr C_{\delta'}^{\alpha'}$ whenever $\delta'>\delta$ and $\alpha'<\alpha.$ Therefore to prove the lemma, it suffices to show that if the two inequalities in the lemma statement hold, then there exist $\alpha',\delta'$ such that $$\lim_{a\to \infty} \sup_{\epsilon \in (0,1]} \Bbb P(\|h^\epsilon\|_{ \mathscr C_{\delta'}^{\alpha'} }>a) = 0.$$
We actually show something stronger, namely that under the given assumptions, there exists $C'>0$ such that for all $a>0$ \begin{equation}\label{8}\sup_{\epsilon \in (0,1]} \Bbb P(\|h^\epsilon\|_{ \mathscr C^{\alpha'}_{\delta'}} > a) \leq C'a^{-p},
\end{equation}
where $p$ is the same exponent given in the lemma statement. 
To prove this we write $\|h^\epsilon\|_{ \mathscr C^{\alpha'}_{\delta'}} = \|h^\epsilon\|_{\delta'}+ [h^\epsilon]_{\alpha',\delta'}$ where $\|h\|_{\delta'}:= \sup_{x\in\Bbb R} \frac{|h(x)|}{(1+|x|)^{\delta'}}$ and $[h]_{\alpha',\delta'}:= \sup_{x\in \Bbb R} (1+|x|)^{-\delta'} \sup_{|y-x|\leq 1} \frac{|h(x)-h(y)|}{|x-y|^{\alpha'}}. $ 
\\
\\
To prove \eqref{8}, the following fact will be useful to us: For any $\gamma \in (0,1),$ the $\gamma$-Hölder seminorm $[f]_{\gamma}$ of a function $f: [0,1]\to \Bbb R$ is equivalent (as a seminorm) to the quantity given by $\sup_{n \in \Bbb N, 1 \le k \le 2^n} 2^{\gamma n} |f(k2^{-n})-f((k-1)2^{-n})|.$ This is proved as an intermediate step in the standard proof of the classical Kolmogorov-Chentsov criterion. 
\\
\\
The exact choices of $\alpha',\delta'$ will be specified later, but for now let them denote generic constants. Now to prove \eqref{8} let us write for a function $h$,
\begin{align*}
\|h\|_{\delta'} &\leq \sup_{n\in \Bbb Z} (1+|n|)^{-\delta'} \big(|h(n)| + \sup_{x\in [n,n+1]} |h(x)-h(n)|\big) \\ &\leq \sup_{n\in \Bbb Z} (1+|n|)^{-\delta'} \bigg(|h(n)| + \sup_{x\in [n,n+1]} \frac{|h(x)-h(n)|}{|x-n|^{\alpha'}}\bigg) \\ &\lesssim \sup_{n\in \Bbb Z} (1+|n|)^{-\delta'}\bigg(|h(n)| + \sup_{r \in \Bbb N, 1\leq k \leq 2^r} 2^{\alpha'r} |h(n+k2^{-r})-h(n+(k-1)2^{-r})| \bigg),
\end{align*}
where $\lesssim$ denotes the absorption of some universal constant which can depend on $\alpha',\delta'$ but not on the function $h$. Likewise let us note that
\begin{equation*}
    [h]_{\alpha',\delta'}\lesssim \sup_{n\in \Bbb Z} (1+|n|)^{-\delta} \sup_{r \in \Bbb N, 1\leq k \leq 2^r} 2^{\alpha'r} |h(n+k2^{-r})-h(n+(k-1)2^{-r})|.
\end{equation*}
Consequently we find that $$\|h\|_{\mathscr C_{\delta'}^{\alpha'} }\lesssim A(h,\delta')+B(h,\alpha',\delta'),$$
where 
\begin{align*}A(h,\delta')&:= \sup_{n\in \Bbb Z} (1+|n|)^{-\delta'} |h(n)|,
\\ B(h,\alpha',\delta')&:= \sup_{n\in \Bbb Z} (1+|n|)^{-\delta'} \sup_{r \in \Bbb N, 1\leq k \leq 2^r} 2^{\alpha'r} |h(n+k2^{-r})-h(n+(k-1)2^{-r})|.
\end{align*}
Now, with $h^\epsilon$ as given in the lemma statement, let us bound these terms $A(h^\epsilon,\delta')$ and $B(h^\epsilon, \alpha',\delta')$ individually to obtain \eqref{8}. We will do this by using the hypotheses in the lemma. Note that by a brutal union bound and Markov's inequality followed by the hypothesis $\|h^\epsilon(x)\|_p \leq (1+|x|)^{\delta},$ we have
\begin{align*}
    \Bbb P( A(h^\epsilon,\delta')>a) &\leq \sum_{n\in \Bbb Z} \Bbb P(|h^\epsilon(n)|>(1+|n|)^{\delta'} a) \\ &\leq \sum_{n \in \Bbb Z} a^{-p}(1+|n|)^{-\delta'p}E[|h^\epsilon(n)|^p] \\ &\leq a^{-p} \sum_{n \in \Bbb Z} (1+|n|)^{(\delta-\delta')p},
\end{align*}
The series converges as long as $\delta'$ is chosen so that $(\delta-\delta')p<-1$, for instance $\delta':= \frac12(1+\delta+\frac1p)$ which is less than $1$ by the hypothesis that $p>(1-\delta)^{-1}.$ Next we control $B$, which will also just use a brutal union bound and Markov's inequality:
\begin{align*}
    \Bbb P(B(h^\epsilon,\alpha',\delta')>a) &\leq \sum_{\substack{n\in\Bbb Z\\r\in \Bbb N\\1\leq k \leq 2^r}} \Bbb P( 2^{\alpha'r} |h^\epsilon(n+k2^{-r})-h^\epsilon(n+(k-1)2^{-r})| > (1+|n|)^{\delta'} a) \\ &\leq \sum_{\substack{n\in\Bbb Z\\r\in \Bbb N\\1\leq k \leq 2^r}} a^{-p} 2^{\alpha'pr}(1+|n|)^{-\delta'p}  \Bbb E\big|h^\epsilon(n+k2^{-r})-h^\epsilon(n+(k-1)2^{-r})\big|^p \\ &\leq a^{-p} \sum_{\substack{n\in\Bbb Z\\r\in \Bbb N\\1\leq k \leq 2^r}} 2^{(\alpha'-\alpha)pr} (1+|n|)^{(\delta-\delta')p} \\ &= a^{-p} \sum_{\substack{n\in\Bbb Z\\r\in \Bbb N}} 2^{\big[1+(\alpha'-\alpha)p\big]r} (1+|n|)^{(\delta-\delta')p}
\end{align*}
The series converges so long as $(\delta-\delta')p<-1$ and $1+(\alpha'-\alpha)p<0.$ We already chose $\delta'$ earlier so as to satisfy the condition $(\delta-\delta')p<-1$. Now $\alpha'$ can be chosen for instance $\frac12(\alpha-\frac1p)$ which is positive since $p>\alpha^{-1}.$
\end{proof}

\subsection{More general models and further problems}

One may ask the question of how robust the above method of proof is. The answer is that it is generalizable to more complex systems than ASEP, but it is not all-encompassing. More precisely, the method is applicable to any particle system where 
\begin{itemize}
    \item both \eqref{M} and \eqref{A} hold.
    
    \item one has a discrete martingale equation as in \eqref{discshe}.
    
    \item the discrete martingales from \eqref{discshe} satisfy \eqref{qv}.
    
\end{itemize}
Then one can essentially copy and paste the proof above (with minor modifications) to prove joint convergence in those systems as well.
\\
\\
For instance, by taking $\lambda_i=0$ in Theorem \ref{mon1}, our method will also work to show joint convergence of the nearest-neighbor \textit{symmetric} simple exclusion process to the Edwards-Wilkinson fixed point. Actually this is even simpler, as one need not perform a nonlinear transform to obtain a discrete SPDE as in \eqref{discshe}. The height function itself will satisfy an equation similar to \eqref{discshe} with the martingales satisfying \eqref{qv}. The proofs of all other propositions and lemmas work in precisely the same way as done above.
\\
\\
Less trivial examples of systems satisfying all three of the points above are higher-spin misanthrope processes. One concrete example of such a particle system is the ASEP($q,J$) model from \cite{CST18}. This comes from the generator \eqref{gen} on $\{0,...,J\}^\Bbb Z$ by taking $$b(1,a,b):=\frac1{2[J]_q}q^{a-b-(J+1)} [a]_q[J-b]_q,\;\;\;\;\;\;\; b(-1,a,b):=\frac1{2[J]_q}q^{a-b-(J+1)} [J-a]_q[b]_q,$$ where $[a]_q:=\frac{q^q-q^{-a}}{q-q^{-1}}$ for $q\in(0,1).$ ASEP($q,J$) satisfies \eqref{A} as well as \eqref{M} thanks to the nearest-neighbor interaction. The main result of \cite{CST18} then proves convergence of the associated (diffusively scaled and renormalized) height function to the KPZ equation by scaling the model parameter as $q=e^{-\epsilon}$. Note that this recovers the results of \cite{BG97} by setting $J=1$. Proposition 2.1 in \cite{CST18} says precisely that \eqref{discshe} and \eqref{qv} are satisfied with $Z_t$ defined in expression (1.8) there and $a_\epsilon, b_\epsilon$ defined accordingly in \eqref{resc}. We then have the following result:
\begin{thm}
Theorems \ref{mr} and \ref{mr2} still hold if we replace ASEP by ASEP($q,J)$, scaling $q$ as $e^{-\epsilon}$ in the model parameters above.
\end{thm}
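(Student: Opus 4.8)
The plan is to carry out the transcription advertised at the start of this subsection. The proof of Theorems \ref{mr} and \ref{mr2} used ASEP only through three structural features: the attractivity \eqref{A} and monotonicity \eqref{M} of the height functions; the convergence of a single renormalized height function to the Hopf--Cole solution of \eqref{KPZ} (Theorem \ref{bg97}); and the discrete stochastic heat equation \eqref{discshe} together with the orthogonality \eqref{qv} of its driving martingales. For ASEP($q,J$) all three are available off the shelf: \eqref{A} and \eqref{M} hold because it is a nearest-neighbor generalized misanthrope process, so the single-jump coupling argument used for ASEP applies verbatim; the role of Theorem \ref{bg97} is played by the main theorem of \cite{CST18}, which under the scaling $q=e^{-\epsilon}$ gives convergence of the rescaled, renormalized height function $h^\epsilon$ --- with the constants $a_\epsilon,b_\epsilon$ in \eqref{resc} determined by Proposition 2.1 of \cite{CST18} --- to the Hopf--Cole solution of \eqref{KPZ} for some fixed nonlinearity parameter $\lambda$, the same for every basically coupled copy; and Proposition 2.1 of \cite{CST18} is exactly the assertion that the exponential field $Z^{i,\epsilon}$ of equation (1.8) there satisfies a discrete SHE of the form \eqref{discshe} whose martingale part obeys \eqref{qv} (only the fact that the prelimit diffusion coefficient, an explicit function of $q$, tends to $1$ is used). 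The first step is therefore just to record these substitutions; once that is done, Lemmas \ref{lem1} and \ref{lem2}, Remark \ref{>2}, Proposition \ref{prop}, and Theorem \ref{mr} are reproduced with no change of structure, and then Lemmas \ref{3.4.1} and \ref{3.4.2} --- Skorohod-representation and Kolmogorov--Chentsov statements about the spaces $\mathscr C^\alpha_\delta$ that take no further input from the particle system beyond Theorem \ref{mr} itself --- deliver Theorem \ref{mr2}.

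Two ingredients require a genuine, if routine, modification. First, ``viable'' now means a function $\Bbb Z\to\Bbb Z$ with increments in $\{-J,-J+2,\dots,J\}$, and one checks that the pointwise maximum and minimum of two viable functions are again viable provided $h^1(0)\equiv h^2(0)\pmod{2}$; a parity mismatch, if present, can always be removed by adding the constant $1$ to one profile, which is invisible after rescaling, so the $\max$/$\min$ reductions in Lemma \ref{lem1} and in Proposition \ref{prop} (where $H^3_0$ is assembled from the $H^i_0$) go through unchanged. Second, the approximation algorithm of Lemma \ref{two} must be adapted to output rescaled viable functions for general $J$: one runs the same construction with the elementary increments $\pm\epsilon^{1/2}$ of $\mathcal A^\epsilon f$ replaced by $\pm a_\epsilon J$ --- equivalently, unscaled increments $\pm J$, which are admissible and of the correct parity for every $J$ --- keeping the common alternating phase $(-1)^{x/\epsilon}$ on the ``oscillating'' portions. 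The monotonicity clause ``$\mathcal A^\epsilon(g)-\mathcal A^\epsilon(f)$ nondecreasing whenever $g-f$ is'' survives for the same reason as in Lemma \ref{two}: when $g'\ge f'$, on every mesoscopic block $\mathcal A^\epsilon g$ uses at least as many $+J$ steps as $\mathcal A^\epsilon f$ and all profiles oscillate in the same phase, so the stepwise increment of the difference is always $\ge 0$; the Hölder and pointwise-convergence estimates are then reproduced line by line with $\epsilon^{1/2}$ replaced throughout by $a_\epsilon J\asymp\epsilon^{1/2}$. (When $J>1$ the basic coupling also carries the auxiliary i.i.d.\ uniform variables of Subsection 3.1, which one simply includes in the common probability space used in Proposition \ref{prop} and Theorem \ref{mr}.)

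The step I expect to be the real, though still routine, obstacle is the analogue of Lemma \ref{lem2}: verifying that the driving noises $W^1,W^2$ of two basically coupled limit points $H^1,H^2$ of ASEP($q,J$) satisfy the hypotheses of Corollary \ref{cor}. Condition (3) there is immediate from \eqref{A}, which propagates the ordering of the initial data to all positive times, so $H^2(t,\cdot)-H^1(t,\cdot)$ is nondecreasing, hence of finite variation, for every $t$. For conditions (1) and (2) I would follow the ASEP argument closely: using \eqref{discshe} one shows that $Z^i:=e^{\lambda H^i}$ satisfies $(Z^i_t,\phi)_{L^2}-\int_0^t(Z^i_s,\phi'')_{L^2}\,ds=M^i_t(\phi)$, where $M^i$ is the orthogonal martingale measure arising as the $\epsilon\to0$ limit of $\epsilon\sum_x\phi(\epsilon x)M^{i,\epsilon}_t(x)$, the orthogonality $\langle M^i(\phi),M^j(\psi)\rangle=0$ across $i,j$ and across disjoint supports being inherited from \eqref{qv} in the prelimit; one then reconstructs $W^i$ as an It\^o--Walsh integral against $M^i$ in the sense of \cite{Wal86,KS88}, which automatically makes the $W^i(\phi)$ martingales in their \emph{joint} filtration and forces $\langle W^i(\phi),W^j(\psi)\rangle=0$ for test functions of disjoint support. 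The points that need care are that the convergence $Z^{i,\epsilon}\to Z^i$ from \cite{CST18} be invoked jointly over $i$ and that it come packaged with the Hopf--Cole identification $H^i=\lambda^{-1}\log Z^i$; this identification, together with Mueller's positivity \cite{Mue91}, also yields the continuity of the KPZ solution map in the initial data, the single external fact used in the proof of Theorem \ref{mr}. With Corollary \ref{cor} in hand one gets $H^1=H^2$ whenever the initial data are ordered at the particle level, and from there the remainder of Theorem \ref{mr} (nesting between smooth sub- and super-profiles, followed by \eqref{M}) and of Theorem \ref{mr2} is a mechanical repeat of Subsections 3.3--3.4.
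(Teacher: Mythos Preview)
Your proposal is correct and follows the same structural route as the paper: verify that ASEP($q,J$) satisfies \eqref{M}, \eqref{A}, the discrete SHE \eqref{discshe}, and the orthogonality \eqref{qv} via Proposition 2.1 of \cite{CST18}, then rerun Lemmas \ref{lem1}--\ref{lem2}, Proposition \ref{prop}, and Theorems \ref{mr}--\ref{mr2} verbatim, with the basic-coupling probability space augmented by the auxiliary uniform variables when $J>1$.

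The one place where the paper is simpler than your write-up is the handling of Lemma \ref{two}. You propose rewriting the algorithm $\mathcal A^\epsilon$ with elementary increments $\pm J$ and redoing the H\"older estimates; the paper instead observes that this is unnecessary, because a height function viable for ASEP (increments in $\{-1,+1\}$) is automatically viable for ASEP($q,J$) whenever $J$ is odd, and becomes viable after multiplying by $2$ when $J$ is even. So the original $\mathcal A^\epsilon$ can be used unchanged (up to that harmless factor of $2$), and no new estimates are needed. Your version is not wrong, just more work than required; the paper's shortcut also sidesteps the parity bookkeeping you flag for $\max/\min$, since all profiles produced by the ASEP algorithm already share a common parity.
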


\begin{proof}
The proof of Lemma \ref{lem1} holds essentially verbatim as given. For the proof of Lemma \ref{lem2}, we need to replace \eqref{discshe} with the appropriate modification and then verify that \eqref{qv} still holds. See equation (1.8) of \cite{CST18} for the appropriate modification of the discrete equation \eqref{discshe}, and see Proposition 2.1 of \cite{CST18} for the proof that \eqref{qv} still holds. The proof of Lemma \ref{two} still holds verbatim, since height functions which are viable for ASEP are still viable for ASEP($q,J)$ (after perhaps multiplying by 2 in the case that $J$ is even). In the proofs of Proposition \ref{prop} and Theorem \ref{mr}, the argument requires a slight modification: on the probability space $(\Omega,\mathcal F,\Bbb P)$ one should just take the Poisson clocks to be of rate one, and to account for the jump rate differences one should instead add i.i.d. uniform variables to each bond, which are independent of the Poisson clocks. The reason for this is discussed in Subsection 3.1: if $J>1$ then the construction of the basic coupling is slightly more complicated than for single-spin systems. The proof of Theorem \ref{mr2} is unchanged.
\end{proof}

Examples of interesting systems that do \textbf{not} satisfy property \eqref{M} are the non-simple exclusion processes studied for instance in \cite{DT16,Yang}. These processes have a generator similar to \eqref{gen}, the only difference is that non-neighboring sites may interact with one another, so $b$ can be a function from $\Bbb Z\times \{0,...,J\}^2\to [0,1]$ and the sum in \eqref{gen} would be over all pairs $(x,y)\in \Bbb Z^2$. Particles may jump over other particles in these systems, which locally allows height functions to overtake one another. These systems still satisfy \eqref{A}, and thus our proof still works as long as the two sequences of near-stationary initial data are coupled so that one always dominates the other (Lemma \ref{lem2}), however for arbitrary sequences one probably needs to use a different method without appealing to a black box like Theorem \ref{mon1}. For instance one can hope to directly study the quadratic variations appearing in \eqref{discshe}.
\\
\\
Then there are also open boundary systems such as those considered in \cite{CS18}. These do seem to satisfy \eqref{M} but the missing part of the argument is the boundary analogue of Theorem \ref{mon1}. Furthermore, there are also discrete-time vertex models and their degenerations, such as those studied in \cite{CT17, Gho17, CGST20, Lin20}, for which KPZ fluctuations are known. We do not know if these systems fall within the scope of our work, since the rules of their evolution are more complex and do not exactly fit the framework of the misanthrope-type exclusion processes we have described in Subsection 3.1. In particular it is unclear what exactly the basic coupling even means for these models. Some of the aforementioned systems may be explored in future work.% or in an addendum to this work.
\\
\\
Here is another direction in which one can hope to generalize Theorem \ref{mr}. Rather than making the model more complicated, one can instead hope to strengthen the topology in which convergence occurs. Specifically one can hope to prove \textit{uniform} convergence of the entire stochastic flow of ASEP to that of the KPZ equation. More precisely, fix a compact set $K \subset \mathscr C^\alpha_\delta(\Bbb R)$ and let $K_\epsilon \subset \mathcal V^\epsilon \cap \mathscr C^\alpha_\delta$ be a sequence of compact sets that converge to $K$ in the sense of Hausdorff distance, as $\epsilon \to 0$ (where $\mathcal V^\epsilon$ was defined just before Lemma \ref{two}). Consider the random maps $\Phi^\epsilon_t:K_\epsilon \to C(\Bbb R)$ which (for a fixed realization of the Poisson clocks) sends a rescaled initial height function $h$ to the height profile at time $t$ of the ASEP profile started from $h$ and whose dynamics are run according to those Poisson clocks. Consider also the continuum version $\Phi: K\to C(\Bbb R)$ which (for a fixed realization of $\xi$) sends a function $h$ to the time $t$ solution of the KPZ equation started from $h$ and driven by $\xi$. Let $G(\Phi_t^\epsilon)$ denote the set of all $(h,\Phi_t^\epsilon(h))$ such that $h\in K_\epsilon$, and likewise let $G(\Phi_t)$ denote the set of all $(h,\Phi_t(h))$ such that $h\in K$. Also let $d_H$ denote the Hausdorff distance on compact subsets of $C(\Bbb R)\times C(\Bbb R)$ (one could also hope to use the stronger topology of $\mathscr C_\delta^\alpha(\Bbb R)\times \mathscr C_\delta^\alpha(\Bbb R)$). Then one can hope to prove convergence of the entire flow $(\Phi^\epsilon_t)_{t\in [0,T]}$ to $(\Phi_t)_{t\in[0,T]}$ where the convergence is meant to be interpreted, for instance, in the sense that (via Skorohod's representation theorem) there exists a coupling of all $\Phi_t^\epsilon, \Phi_t$ onto some probability space and some $\alpha \in (0,1/2)$ such that $$\limsup_{\delta\to 0} \limsup_{\epsilon \to 0} \bigg[\sup_{t\in [0,T]} d_H(G(\Phi_t^\epsilon),G(\Phi_t))+  \sup_{|s-t|\leq \delta} \frac{d_H(G(\Phi_t^\epsilon),G(\Phi_s^\epsilon))}{|t-s|^\alpha\vee \epsilon^\alpha} \bigg]= 0,$$ where the extra factor $\epsilon^\alpha$ in the denominator is to account for jumps. Theorem \ref{mr} and Remark \ref{>2} show (in some sense) that convergence of these flows holds in the sense of finite-dimensional distributions, but extending the convergence to this uniform Hölder sense might be more interesting. The goal would be to prove this for arbitrary compact sets $K$ and arbitrary approximating sequences $K_\epsilon.$ We do not have strong enough spatial or temporal estimates required to do this, except for the trivial case where $K,K_\epsilon$ are all finite sets with cardinality bounded in $\epsilon,$ in which case the methods of \cite{BG97} combined with our methods used to prove Theorem \ref{mr} are enough.

\end{document}